\author{Yakov Berchenko-Kogan}
\title[Numerically computing the index of self-shrinkers]{Numerically computing the index of mean curvature flow self-shrinkers}
\keywords{mean curvature flow, self-shrinkers, Angenent torus}
\subjclass{53E10, 65L15}
\newcommand\res{med}
\newtheorem{theorem}{Theorem}[section]
\newtheorem{result}[theorem]{Result}
\newtheorem{proposition}[theorem]{Proposition}
\theoremstyle{definition}
\newtheorem{definition}[theorem]{Definition}
\newcommand\abs[1]{\left\lvert{#1}\right\rvert}
\newcommand\norm[1]{\left\lVert{#1}\right\rVert}
\newcommand\dd[2][]{\frac{d{#1}}{d{#2}}}
\newcommand\ddtwo[2][]{\frac{d^2{#1}}{d{#2}^2}}
\newcommand\R{\mathbb R}
\renewcommand\L{\mathcal L}
\newcommand\n{\mathbf n}
\newcommand\mbf\mathbf
\let\div\relax
\DeclareMathOperator\div{div}
\DeclareMathOperator\tr{tr}
\DeclareMathOperator\grad{grad}
\DeclareMathOperator\dist{dist}
\DeclareMathOperator\vol{area}
\begin{document}
\begin{abstract}
  Surfaces that evolve by mean curvature flow develop singularities. These singularities can be modeled by self-shrinkers, surfaces that shrink by dilations under the flow. Singularities modeled on classical self-shrinkers, namely spheres and cylinders, are stable under perturbations of the flow. In contrast, singularities modeled on other self-shrinkers, such as the Angenent torus, are unstable: perturbing the flow will generally change the kind of singularity. One can measure the degree of instability by computing the Morse index of the self-shrinker, viewed as a critical point of an appropriate functional.
  
  In this paper, we present a numerical method for computing the index of rotationally symmetric self-shrinkers. We apply this method to the Angenent torus, the first known nontrivial example of a self-shrinker. We find that, excluding dilations and translations, the index of the Angenent torus is $5$, which is consistent with the lower bound of $3$ from the work of Liu and the upper bound of $29$ from our earlier work. Also, we unexpectedly discover two additional variations of the Angenent torus with eigenvalue $-1$.
\end{abstract}

\maketitle

\section{Introduction}
Mean curvature flow is a geometric evolution equation for surfaces $\Sigma\subset\R^3$, under which each point $x\in\Sigma$ moves in the inward normal direction proportionally to the mean curvature of $\Sigma$ at $x$. More generally, mean curvature flow can be defined as the gradient flow for the area functional. This flow has many applications in geometry and topology, as well as in image denoising.

Under mean curvature flow, spheres and cylinders will shrink by dilations. However, these are not the only examples of such \emph{self-shrinkers}. Angenent found a self-shrinking torus in 1989 \cite{a92}, and since then many other examples have been constructed \cite{dk17, dln18, kkm18, km14, m15, mo11, n09, n10, n14}. These surfaces are important because they model singularities that develop under mean curvature flow. For example, if the initial surface is convex, then it will become rounder as it shrinks, eventually becoming close to a sphere before it shrinks to a point. On the other hand, if the initial surface is not convex, then it may develop a singularity that looks like a different self-shrinker.

Colding and Minicozzi \cite{cm12} show that spheres and cylinders are stable, in the sense that if we perturb a sphere or a cylinder, then the resulting mean curvature flow will still develop a spherical or cylindrical singularity, respectively. Other self-shrinkers are unstable. Nonetheless, the space of unstable variations, defined appropriately, is still finite-dimensional. As in Morse theory, this dimension is called the \emph{index} of the self-shrinker. Note, however, that there are different conventions for the index in the literature; the disagreement is about whether or not to include translations and dilations, which give rise to singularities of the same shape but at different places and times. We exclude translations and dilations in this work.

\subsection{Results}
In this paper, we compute the index of the Angenent torus. However, the computational methods in this paper apply equally well to any rotationally symmetric immersed self-shrinker. See \cite{dk17} for infinitely many examples of such self-shrinkers. More generally, we expect that combining the methods in this paper with finite element methods could be used to compute the indices of self-shrinkers that do not have rotational symmetry.

\begin{result}
  Excluding dilations and translations, the index of the Angenent torus is $5$.
\end{result}

Computing the index of a self-shrinker amounts to counting the number of negative eigenvalues of a certain differential operator called the \emph{stability operator}, which acts on functions that represent normal variations of the self-shrinker. In this paper, we construct a suitable finite-dimensional approximation to the stability operator, and then we compute the eigenvalues and eigenvectors of this matrix. We recover the facts \cite{cm12} that the variation corresponding to dilation has eigenvalue $-1$ and that the variations corresponding to translations have eigenvalue $-\frac12$. Additionally, we discover that, surprisingly, two other variations also have eigenvalue $-1$, and these variations have simple explicit formulas.

\subsection{Relationship to other work}
One can view this paper in several ways.
\begin{itemize}
\item This paper can be viewed a sequel to \cite{bk19}, where we compute the Angenent torus and its entropy. These results are the starting point for the index computation in the current work.
\item This paper can be viewed as a numerical implementation of \cite{l16}, where Liu gives a formula for the stability operator of rotationally symmetric self-shrinkers and shows, in particular, that the index of the Angenent torus is at least $3$.
\item This paper can be viewed as a numerical companion to \cite{bk20a}, where we prove upper bounds on the index of self-shrinking tori. The numerical discovery of the two new variations with eigenvalue $-1$ inspired a simple new formula for the stability operator \cite[Theorem 3.7]{bk20a}. In turn, this formula gives a two-line proof that these numerically discovered variations do indeed have eigenvalue $-1$ \cite[Theorem 6.1]{bk20a}.
\end{itemize}

\subsection{Outline}
In Section~\ref{sec:preliminaries}, we introduce notation and give basic properties of mean curvature flow, self-shrinkers, and the stability operator, both in general and in the rotationally symmetric case. In Section~\ref{sec:methods}, we discuss the numerical methods we use to compute the eigenvalues and eigenfunctions of the stability operator. We present our results in Section~\ref{sec:results}, listing the first several eigenvalues of the stability operator and presenting plots of the corresponding variations of the Angenent torus cross-section. We obtain the index by counting the variations with negative eigenvalues; we present three-dimensional plots of all of these variations in Figure~\ref{fig:3d}. Next, in Section~\ref{sec:error}, we estimate the error in our numerical computations by giving plots that show the rate at which our numerically computed eigenvalues converge as we increase the number of sample points. We summarize the first few computed eigenvalues and our error estimates in Table~\ref{tab:errorplots}. Finally, in Section~\ref{sec:future}, we give a few promising directions for future work. Additionally, we include Appendix~\ref{sec:asymptotics}, where, rather than looking at small eigenvalues, we instead take a cursory look at eigenvalue asymptotics.

\section{Preliminaries: Mean curvature flow and self-shrinkers}\label{sec:preliminaries}
In this section, we introduce mean curvature flow for surfaces in $\R^3$, define self-shrinkers and their index, and discuss the rotationally symmetric case. For a more detailed introduction in  general dimension, see the companion paper \cite{bk20a}. We also refer the reader to \cite{cm12, cmp15, h90, l16}.

\subsection{Notation for surfaces and mean curvature flow}
Let $\Sigma\subset\R^3$ be an immersed oriented surface. Let $\n$ denote the unit normal vector to $\Sigma$. Given a point $x\in\Sigma$ and a vector $v\in T_x\R^{n+1}$, let $v^\perp$ denote the scalar projection of $v$ onto $\n$, namely $v^\perp=\langle v,\n\rangle$. Let $v^\top$ denote the projection of $v$ onto $T_x\Sigma$, namely $v^\top=v-v^\perp\n$.

\begin{definition}
  Let $A_\Sigma$ denote the \emph{second fundamental form} of $\Sigma$. That is, given $v,w\in T_x\Sigma$, let
  \begin{equation*}
    A_\Sigma(v,w)=(\nabla_vw)^\perp
  \end{equation*}
\end{definition}

\begin{definition}
  Let $H_\Sigma$ denote the \emph{mean curvature} of $\Sigma$, defined with the normalization convention $H_\Sigma=-\tr A_\Sigma$.
\end{definition}

That is, if $e_1,e_2$ is an orthonormal frame at a particular point $x\in\Sigma$, then, at that point $x$, $H_\Sigma=-A_\Sigma(e_1,e_1)-A_\Sigma(e_2,e_2)$.

\begin{definition}
  A family of surfaces $\Sigma_t$ evolves under \emph{mean curvature flow} if
  \begin{equation*}
    \dot x=-H_\Sigma\n.
  \end{equation*}
  That is, each point on $\Sigma$ moves with speed $H_\Sigma$ in the inward normal direction.
\end{definition}

\subsection{Self-shrinkers}
A surface $\Sigma$ is a self-shrinker if it evolves under mean curvature flow by dilations. For this paper, however, we will restrict this terminology to refer only to surfaces that shrink to the origin in one unit of time.

\begin{definition}
  A surface $\Sigma$ is a \emph{self-shrinker} if $\Sigma_t=\sqrt{-t}\,\Sigma$ is a mean curvature flow for $t<0$.
\end{definition}

We have an extremely useful variational formulation for self-shrinkers as critical points of Huisken's $F$-functional.

\begin{definition}
  The \emph{$F$-functional} takes a surface and computes its weighted area via the formula
  \begin{equation*}
    F(\Sigma)=\frac1{4\pi}\int_\Sigma e^{-\abs x^2/4}\,d\vol_\Sigma.
  \end{equation*}
\end{definition}

The role of the normalization constant $\frac1{4\pi}$ is to ensure that if $\Sigma$ is a plane through the origin, then $F(\Sigma)=1$.

Since varying a surface in a tangential direction does not change the surface, we can define the critical points of $F$ solely in terms of normal variations.
\begin{definition}
  $\Sigma$ is a \emph{critical point} of $F$ if for any $f\colon\Sigma\to\R$ with compact support, $F$ does not change to first order as we vary $\Sigma$ by $f$ in the normal direction. More precisely, if we let $\Sigma_s=\{x+sf\n\mid x\in\Sigma\}$, then we have $\dd s\bigr\rvert_{s=0}F(\Sigma_s)=0$.
\end{definition}

\begin{proposition}[\cite{cm12}]\label{prop:shrinkercritical}
  $\Sigma$ is a self-shrinker if and only if $\Sigma$ is a critical point of $F$.
\end{proposition}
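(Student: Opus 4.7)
The plan is to reduce both sides of the equivalence to the single pointwise equation $H_\Sigma = \tfrac{1}{2}\langle x, \n\rangle$, often called the \emph{self-shrinker equation}.

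First, for the forward implication on the flow side, I would parametrize the dilated family as $x(t) = \sqrt{-t}\, y$ for a fixed $y \in \Sigma$, so that $\dot x = -\tfrac{y}{2\sqrt{-t}}$. Since mean curvature flow is only sensitive to the normal component of the velocity (tangential motion corresponds to a reparametrization), the curve $\Sigma_t = \sqrt{-t}\,\Sigma$ satisfies $\dot x^\perp = -H_{\Sigma_t}\n$ if and only if $-\tfrac{1}{2\sqrt{-t}}\langle y, \n\rangle = -H_{\Sigma_t}(\sqrt{-t}\,y)$. Using that $H$ scales inversely with length, the right-hand side equals $-H_\Sigma(y)/\sqrt{-t}$, and the $\sqrt{-t}$ factors cancel. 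So $\Sigma$ being a self-shrinker is equivalent to $H_\Sigma = \tfrac{1}{2}\langle x, \n\rangle$ everywhere on $\Sigma$, independently of $t$.

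Second, for the critical-point side, I would compute the first variation of $F$ along the normal variation $\Sigma_s = \{x + sf\n\}$. Differentiating under the integral sign splits the result into two terms: differentiation of the weight $e^{-|x|^2/4}$ and differentiation of the area element $d\vol$. The weight contributes
\begin{equation*}
\tfrac{d}{ds}\bigr\rvert_{s=0} e^{-|x+sf\n|^2/4} = -\tfrac{f\langle x,\n\rangle}{2}\, e^{-|x|^2/4},
\end{equation*}
and the standard first variation of area formula for a normal variation gives $\tfrac{d}{ds}\bigr\rvert_{s=0} d\vol_s = fH_\Sigma\, d\vol$ (using the sign convention $H = -\tr A$, which makes $H$ positive for a sphere with outward normal). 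Combining, I would obtain
\begin{equation*}
\tfrac{d}{ds}\bigr\rvert_{s=0} F(\Sigma_s) = \tfrac{1}{4\pi}\int_\Sigma f\Bigl(H_\Sigma - \tfrac{1}{2}\langle x,\n\rangle\Bigr) e^{-|x|^2/4}\, d\vol_\Sigma.
\end{equation*}

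Finally, a standard bump-function argument shows that this integral vanishes for every compactly supported $f$ if and only if $H_\Sigma - \tfrac{1}{2}\langle x,\n\rangle = 0$ pointwise. This matches the equation obtained in the first step, so the two conditions coincide. The one step requiring care is the first variation of area formula; I would either cite it as a standard fact or verify it by noting that $\tfrac{d}{ds}\bigr\rvert_{s=0} d\vol_s = \div_\Sigma(f\n)\, d\vol = f \tr(\nabla \n|_{T\Sigma})\, d\vol = fH_\Sigma\, d\vol$, where the last equality follows from differentiating $\langle w, \n\rangle = 0$ along $v \in T\Sigma$ to get $\langle \nabla_v \n, w\rangle = -A_\Sigma(v,w)$.
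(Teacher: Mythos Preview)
The paper does not supply its own proof of this proposition; it is stated with a citation to \cite{cm12} and then used as a black box. There is therefore nothing to compare against on the paper's side.

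Your argument is the standard one and is correct. You reduce both conditions to the self-shrinker equation $H_\Sigma=\tfrac12\langle x,\n\rangle$: on the flow side by taking the normal component of $\dot x$ for the dilating family and using the scaling behavior of $H$, and on the variational side by combining the derivative of the Gaussian weight with the first variation of area $\tfrac{d}{ds}\bigr\rvert_{s=0}d\vol_s=fH_\Sigma\,d\vol$. The sign bookkeeping is consistent with the paper's convention $H_\Sigma=-\tr A_\Sigma$, and your verification that $\div_\Sigma(f\n)=fH_\Sigma$ via $\langle\nabla_v\n,w\rangle=-A_\Sigma(v,w)$ is exactly the right justification. One small remark: the paper's definition of mean curvature flow literally reads $\dot x=-H_\Sigma\n$, so strictly speaking you are using that this equation need only hold up to tangential reparametrization; you acknowledge this, and it is the standard interpretation.
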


The definition of the $F$-functional is not invariant under translation and dilation: The Gaussian weight is centered around the origin in $\R^3$, and the length scale of the Gaussian is designed so that the critical surfaces become extinct in exactly one unit of time. Colding and Minicozzi introduce a related concept called the entropy, which coincides with the $F$-functional on self-shrinkers but is invariant under translation and dilation.
\begin{definition}
  The \emph{entropy} of a surface $\Sigma\subset\R^3$ is the supremum of the $F$-functional evaluated on all translates and dilates of $\Sigma$, that is $\sup_{x_0,t_0}F(x_0+\sqrt{t_0}\Sigma)$.
\end{definition}
If $\Sigma$ is a self-shrinker, defined as above to shrink to the origin in one unit of time, then the supremum among translates and dilates is attained at $\Sigma$ itself, so the entropy of $\Sigma$ coincides with $F(\Sigma)$. However, entropy-decreasing variations of $\Sigma$ and $F$-decreasing variations of $\Sigma$ are not quite the same: when we ask about entropy-decreasing variations, we exclude the ``trivial'' $F$-decreasing variations of translation and dilation.

\subsection{The stability operator}
Given a critical point of a flow, the next natural question to ask is about the stability of that critical point. If we perturb a self-shrinker, will the resulting surface flow back to the self-shrinker under the gradient flow for the $F$-functional, or will it flow to a different critical point? What is the maximum dimension of a space of unstable variations? As in Morse theory, answering this question amounts to computing the eigenvalues of the Hessian of the $F$-functional.

\begin{definition}\label{def:stability}
  Let $\Sigma$ be a self-shrinker. The \emph{stability operator} $L_\Sigma$ is a differential operator acting on functions $f\colon\Sigma\to\R$ that is the Hessian of $F$ in the following sense.
  \begin{itemize}
  \item For any $f\colon\Sigma\to\R$,
    \begin{equation}\label{eq:stability}
      \ddtwo s\Bigr\rvert_{s=0}F(\Sigma_s)=\frac1{4\pi}\int_\Sigma f(-L_\Sigma)f\,e^{-\abs x^2/4}\,d\vol_\Sigma,
    \end{equation}
    where $\Sigma_s=\{x+sf\n\mid x\in\Sigma\}$ is the normal variation corresponding to $f$, and
  \item $L_\Sigma$ is symmetric with respect to the Gaussian weight, in the sense that $\int_\Sigma f_1L_\Sigma f_2\,e^{-\abs x^2/4}\,d\vol_\Sigma=\int_\Sigma f_2L_\Sigma f_1\,e^{-\abs x^2/4}\,d\vol_\Sigma$.
  \end{itemize}
\end{definition}

The reason for the odd choice of sign is so that the differential operator $L_\Sigma$ has the same leading terms as the Laplacian $\Delta_\Sigma=\div_\Sigma\grad_\Sigma$. More precisely, Colding and Minicozzi \cite{cm12} compute that,
\begin{equation*}
  L_\Sigma=\L_\Sigma+\abs{A_\Sigma}^2+\tfrac12,
\end{equation*}
where
\begin{equation*}
  \L_\Sigma f=e^{\abs x^2/4}\div_\Sigma\left(e^{-\abs x^2/4}\grad_\Sigma f\right).
\end{equation*}
Consequently, we make the following sign convention for eigenvalues.

\begin{definition}[Sign convention for eigenvalues]
  We say that $f\neq0$ is an eigenfunction of a differential operator $L$ with eigenvalue $\lambda$ if $-Lf = \lambda f$.
\end{definition}

We conclude that eigenfunctions of the stability operator $L_\Sigma$ with negative eigenvalues are unstable variations of the self-shrinker $\Sigma$: If we vary $\Sigma$ in that direction, then the gradient flow for $F$ will take the surface away from $\Sigma$. Meanwhile, eigenfunctions of the stability operator $L_\Sigma$ with positive eigenvalues are stable variations of $\Sigma$: There exists a gradient flow line that approaches $\Sigma$ from that direction.

Translating $\Sigma$ in the direction $v\in\R^3$ corresponds to the normal variation $f=v^\perp$, and dilating $\Sigma$ corresponds to the normal variation $f=H_\Sigma$. Colding and Minicozzi compute the corresponding eigenvalues of the stability operator.
\begin{proposition}[\cite{cm12}]
  For any vector $v\in\R^3$, we have $L_\Sigma v^\perp=\frac12v^\perp$. Meanwhile, for dilation, we have $L_\Sigma H_\Sigma=H_\Sigma$.
\end{proposition}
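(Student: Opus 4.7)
The plan is to verify both eigenvalue identities by direct pointwise calculation, using the Colding--Minicozzi decomposition $L_\Sigma = \L_\Sigma + |A_\Sigma|^2 + \tfrac12$ together with the explicit form $\L_\Sigma f = \Delta_\Sigma f - \tfrac12\langle x, \grad_\Sigma f\rangle$ obtained by expanding the weighted divergence. Note that the variational characterization in Definition~\ref{def:stability} produces only an integrated identity, which cannot by itself yield pointwise eigenvalue equations; the explicit formula for $L_\Sigma$ is essential.

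First I would compute tangential gradients using the Weingarten relation $\nabla_{e_i}\n = -\sum_k A_\Sigma(e_i, e_k)\, e_k$ in a local orthonormal frame. Since $v$ is parallel this gives $\grad_\Sigma v^\perp = -A_\Sigma(\cdot, v^\top)$, and the analogous computation for the position vector (where $\langle e_i, \n\rangle = 0$ forces the extra boundary term to drop) gives $\grad_\Sigma x^\perp = -A_\Sigma(\cdot, x^\top)$ as well. Differentiating once more in a geodesic frame at a point and invoking the Codazzi identity $\sum_i e_i A_\Sigma(e_i, e_k) = -e_k H_\Sigma$ (with the sign dictated by the convention $H_\Sigma = -\tr A_\Sigma$) then produces the Simons-type identity
\begin{equation*}
\Delta_\Sigma v^\perp = -|A_\Sigma|^2 v^\perp + \langle v, \grad_\Sigma H_\Sigma\rangle.
\end{equation*}
The analogous calculation for $x^\perp$ picks up an extra $+H_\Sigma$ coming from $\langle \nabla_{e_j}x, \nabla_{e_i}\n\rangle = -A_{ij}$, a term that is absent in the constant-vector case.

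Next I would substitute the self-shrinker equation $H_\Sigma = \tfrac12 x^\perp$, which follows from differentiating $\Sigma_t = \sqrt{-t}\,\Sigma$ at $t=-1$ and matching against the mean curvature flow equation, together with its consequence $\grad_\Sigma H_\Sigma = -\tfrac12 A_\Sigma(\cdot, x^\top)$. For the translation case, the drift term $-\tfrac12\langle x, \grad_\Sigma v^\perp\rangle = \tfrac12 A_\Sigma(x^\top, v^\top)$ in $\L_\Sigma v^\perp$ cancels against $\langle v, \grad_\Sigma H_\Sigma\rangle = -\tfrac12 A_\Sigma(v^\top, x^\top)$ by symmetry of $A_\Sigma$, leaving $\L_\Sigma v^\perp = -|A_\Sigma|^2 v^\perp$ and hence $L_\Sigma v^\perp = \tfrac12 v^\perp$. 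The same substitution with $x$ in place of $v$ produces the same cancellation plus the extra $+H_\Sigma = \tfrac12 x^\perp$, yielding $\L_\Sigma x^\perp = \tfrac12 x^\perp - |A_\Sigma|^2 x^\perp$; adding back $|A_\Sigma|^2 x^\perp + \tfrac12 x^\perp$ gives $L_\Sigma x^\perp = x^\perp$, equivalent to $L_\Sigma H_\Sigma = H_\Sigma$ by linearity.

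The main obstacle will be bookkeeping: keeping signs consistent through successive applications of Weingarten and Codazzi under the paper's convention $H_\Sigma = -\tr A_\Sigma$, and correctly identifying how $\grad_\Sigma x^\perp$ and $\Delta_\Sigma x^\perp$ differ from their constant-vector analogues by the contributions of $\nabla_{e_i} x = e_i$. Once those signs are settled, both statements fall out of the same computation, distinguished only by the single extra $H_\Sigma$ term coming from the non-parallelness of the position vector.
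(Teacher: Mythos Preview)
The paper does not prove this proposition; it is stated with a citation to Colding--Minicozzi \cite{cm12} and no argument is given. Your proposal is the standard direct computation from that reference, and the steps you outline are correct: the Weingarten relation yields $\grad_\Sigma v^\perp = -A_\Sigma(\cdot, v^\top)$, Codazzi gives the Simons-type formula $\Delta_\Sigma v^\perp = -|A_\Sigma|^2 v^\perp + \langle v^\top, \grad_\Sigma H_\Sigma\rangle$, and the self-shrinker equation $H_\Sigma = \tfrac12 x^\perp$ produces exactly the cancellation you describe between the drift term and the $\grad_\Sigma H_\Sigma$ term. The extra $+H_\Sigma$ in the position-vector case is precisely the contribution of $\nabla_{e_i}x = e_i$, as you note. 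So your argument is correct and complete; there is simply nothing in the paper to compare it against beyond the citation.
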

Thus, assuming these functions are nonzero, $v^\perp$ and $H_\Sigma$ are eigenfunctions of $L_\Sigma$, giving us $4$ independent eigenfunctions. With our sign convention, the eigenvalue corresponding to $v^\perp$ is $-\frac12$, and the eigenvalue corresponding to $H_\Sigma$ is $-1$. Additionally, because $F$ is invariant under rotations about the origin, a variation of $\Sigma$ corresponding to a rotation about the origin will be an eigenfunction with eigenvalue $0$.

Because $L_\Sigma$ has the same symbol as $\Delta_\Sigma$, it has a finite number of negative eigenvalues, at least in the case of compact $\Sigma$. Usually, one defines the index of a critical point of a gradient flow to be the number of negative eigenvalues of the Hessian. However, because translations and dilations do not change the shape of the self-shrinker, we exclude them in this context.

\begin{definition}
  The \emph{index} of a self-shrinker $\Sigma$ is the number of negative eigenvalues of the stability operator $L_\Sigma$, excluding those eigenvalues corresponding to translations and dilations.
\end{definition}

Assuming that $\Sigma$ is not invariant under any translations or dilations, its index is simply $4$ less than the usual Morse index.

Under mild assumptions, Colding and Minicozzi show that the only self-shrinkers with index zero are planes, round spheres, and round cylinders \cite{cm12}

\subsection{Rotationally symmetric self-shrinkers}
If $\Sigma\subset\R^3$ is a hypersurface with $SO(2)$ rotational symmetry, we can understand it in terms of its cross-sectional curve $\Gamma$. We refer the reader to \cite{l16}.

We will use cylindrical coordinates $(r,\theta,z)$ on $\R^3$.

\begin{definition}
  We say that a hypersurface is \emph{rotationally symmetric} if it is invariant under rotations about the $z$-axis.
\end{definition}

If $\Sigma$ is rotationally symmetric, we let $\Gamma$ denote its $\theta=0$ cross-section, which we also think of as being a curve in the half-plane $\{(r,z)\mid r\ge0,z\in\R\}$.

We can write the $F$-functional in terms of $\Gamma$.
\begin{proposition}\label{prop:FGamma}
  If $\Sigma$ is a rotationally symmetric hypersurface with cross-section $\Gamma$, then
  \begin{equation*}
    F(\Sigma)=\frac12\int_\Gamma re^{-\abs x^2/4}\,d\ell,
  \end{equation*}
  where $d\ell$ denotes integration with respect to arc length.
\end{proposition}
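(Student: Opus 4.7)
The plan is a direct computation using the standard formula for the area element on a surface of revolution, exploiting the fact that the integrand in the $F$-functional is rotationally symmetric.

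First, I would parametrize $\Sigma$ by $(\theta, s) \in [0, 2\pi) \times I$, where $s$ is an arc-length parameter along $\Gamma$ and $I$ is the parameter interval for the cross-section. If we write $\Gamma(s) = (r(s), z(s))$ in the half-plane, then the map $(\theta, s) \mapsto (r(s)\cos\theta, r(s)\sin\theta, z(s))$ parametrizes $\Sigma$. A standard computation of the induced metric shows that the area element is
\begin{equation*}
  d\vol_\Sigma = r(s)\,d\theta\,ds = r\,d\theta\,d\ell.
\end{equation*}

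Next, I would observe that $\abs{x}^2 = r^2 + z^2$ depends only on the cross-sectional coordinates, so the weight $e^{-\abs x^2/4}$ is independent of $\theta$. Therefore, Fubini's theorem gives
\begin{equation*}
  F(\Sigma) = \frac{1}{4\pi}\int_0^{2\pi}\!\!\int_\Gamma e^{-\abs x^2/4}\,r\,d\ell\,d\theta
  = \frac{1}{4\pi}\cdot 2\pi\int_\Gamma r e^{-\abs x^2/4}\,d\ell
  = \frac12\int_\Gamma r e^{-\abs x^2/4}\,d\ell,
\end{equation*}
which is the claimed formula.

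There is essentially no obstacle: the only slightly nontrivial ingredient is the area-of-revolution element $r\,d\theta\,d\ell$, which is standard. One minor bookkeeping point is to allow $\Gamma$ to meet the axis $r=0$, as happens for immersed shrinkers like the Angenent torus only if the curve touches the axis; the integrand $re^{-\abs x^2/4}$ vanishes there, so no special care is needed at the axis even when it is present.
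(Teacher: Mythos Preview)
Your argument is correct. The paper states this proposition without proof, as the computation is standard; your derivation via the surface-of-revolution area element $r\,d\theta\,d\ell$ together with the $\theta$-independence of the Gaussian weight is exactly the expected one. (One small quibble: the Angenent torus cross-section does not in fact meet the axis, so your closing remark about $r=0$ is unnecessary here, though harmless.)
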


To simplify our notation, we will let $\sigma$ denote this weight.
\begin{definition}
  Let $\sigma\colon\R_{\ge0}\times\R\to\R$ denote the weight
  \begin{equation*}
    \sigma=\frac12re^{-\abs x^2/4}.
  \end{equation*}
\end{definition}

With this notation, our expression $\frac12\int_\Gamma re^{-\abs x^2/4}\,d\ell$ for $F(\Sigma)$ is simply the length of the curve $\Gamma$ with respect to the conformally changed metric $\sigma^2(dr^2+dz^2)$.
\begin{definition}
  Let $g^\sigma$ denote the metric $\sigma^2(dr^2+dz^2)$ on the half-plane $\{(r,z)\mid r\ge0,z\in\R\}$.
\end{definition}
If $\Sigma$ is a self-shrinker, then $\Sigma$ is a critical point for $F$, so $\Gamma$ is a critical point for $g^\sigma$-length. In other words, $\Gamma$ is a geodesic with respect to $g^\sigma$.

\subsection{The stability operator for rotationally symmetric self-shrinkers}
Varying the cross-section $\Gamma$ only yields rotationally symmetric variations of $\Sigma$. To understand the stability operator $L_\Sigma$ in terms of $\Gamma$, we must understand non-rotationally symmetric variations of $\Sigma$ as well. Liu \cite{l16} does so by decomposing normal variations $f\colon\Sigma\to\R$ into their Fourier components. The stability operator $L_\Sigma$ commutes with this Fourier decomposition, so we can decompose $L_\Sigma$ into its Fourier components $L_k$, which are operators acting on functions on $\Gamma$.

We begin with the rotationally symmetric part of the Fourier decomposition.
\begin{definition}
  Let $\Sigma$ be a rotationally symmetric self-shrinker with cross-section $\Gamma$. For any $u\colon\Gamma\to\R$, we have a corresponding rotationally symmetric function $f\colon\Sigma\to\R$. Define the operator $L_0$ by
  \begin{equation*}
    L_0u=L_\Sigma f.
  \end{equation*}
\end{definition}

Note that, if $f$ is a rotationally symmetric function as above, then the normal variation $\Sigma_s=\{x+sf\n\mid x\in\Sigma\}$ has cross-section $\Gamma_s=\{x+su\n\mid x\in\Gamma\}$, and equation~\eqref{eq:stability} in Definition~\ref{def:stability} becomes
\begin{equation}\label{eq:l0}
  \ddtwo s\Bigr\rvert_{s=0}\int_{\Gamma_s}\sigma\,d\ell=\int_\Gamma u(-L_0)u\,\sigma\,d\ell,
\end{equation}
where $d\ell$ denotes integration with respect to the usual Euclidean arc length.

We now define the other Fourier components of $L_\Sigma$.
\begin{definition}
  For each non-negative integer $k$, let $L_k$ be the operator
  \begin{equation*}
    L_k=L_0-\frac{k^2}{r^2}.
  \end{equation*}
\end{definition}

\begin{proposition}[\cite{l16}]
  For any $u\colon\Gamma\to\R$ and any $k\ge0$, we have
  \begin{align*}
    L_\Sigma(u\cos k\theta)&=(L_ku)\cos k\theta,\\
    L_\Sigma(u\sin k\theta)&=(L_ku)\sin k\theta.
  \end{align*}
\end{proposition}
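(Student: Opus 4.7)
The plan is to exploit the explicit decomposition $L_\Sigma=\L_\Sigma+\abs{A_\Sigma}^2+\tfrac12$ stated just after Definition~\ref{def:stability}, and to verify that both $\L_\Sigma$ and the potential commute with multiplication by $\cos k\theta$ (resp.\ $\sin k\theta$), except for a single contribution of $-k^2/r^2$ coming from the $\theta\theta$-component of the Laplacian. Since $\Sigma$ is rotationally symmetric, the scalar $\abs{A_\Sigma}^2$ is a function of the arc-length coordinate along $\Gamma$ only, and likewise the constant $\tfrac12$ is trivially symmetric, so the potential part of $L_\Sigma$ acts on $u\cos k\theta$ simply as $(\abs{A_\Sigma}^2+\tfrac12)u\cos k\theta$. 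The whole problem therefore reduces to computing how $\L_\Sigma$ acts on $u(s)\cos k\theta$.

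First, I would set up cylindrical coordinates $(s,\theta)$ on $\Sigma$, where $s$ is arc length along $\Gamma$. Parameterizing $\Sigma$ by $(s,\theta)\mapsto(r(s)\cos\theta,r(s)\sin\theta,z(s))$ with $(r')^2+(z')^2=1$, one checks that the induced metric is $ds^2+r(s)^2\,d\theta^2$ and hence that
\begin{equation*}
  \Delta_\Sigma f=\partial_s^2 f+\frac{r'}{r}\partial_s f+\frac{1}{r^2}\partial_\theta^2 f.
\end{equation*}
Next, I would compute the drift term in $\L_\Sigma f=\Delta_\Sigma f-\tfrac12\langle x,\grad_\Sigma f\rangle$. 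The crucial observation is that $\langle x,\partial_\theta\rangle=-r^2\cos\theta\sin\theta+r^2\sin\theta\cos\theta=0$, so the drift involves only $\partial_s f$. Therefore the only occurrence of $\theta$-derivatives in $\L_\Sigma$ is through $r^{-2}\partial_\theta^2$.

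Applying this to $f=u(s)\cos k\theta$ gives $\partial_\theta^2 f=-k^2 u\cos k\theta$, while every other derivative carries the factor $\cos k\theta$ through unchanged. Combining with the potential yields
\begin{equation*}
  L_\Sigma(u\cos k\theta)=\bigl(L_0 u\bigr)\cos k\theta-\frac{k^2}{r^2}u\cos k\theta=(L_k u)\cos k\theta,
\end{equation*}
where I identify the coefficient of $\cos k\theta$ obtained when $k=0$ with $L_0 u$; this identification is consistent because equation~\eqref{eq:l0} defines $L_0$ through the second variation of the $\sigma$-weighted length, which equals the restriction of $L_\Sigma$ to rotationally symmetric variations. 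The case of $\sin k\theta$ is identical, since the derivation depends only on $\partial_\theta^2$ producing a factor of $-k^2$.

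The computation is essentially routine once the coordinate setup is in place; the only step that requires genuine care is checking that $\langle x,\partial_\theta\rangle=0$, so that no $\partial_\theta f$ term enters the drift. This is what ensures the Fourier mode $\cos k\theta$ is not mixed with other modes by $\L_\Sigma$, and it is the geometric reason that $L_\Sigma$ decomposes into the operators $L_k$.
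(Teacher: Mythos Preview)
The paper does not supply its own proof of this proposition; it is stated with a citation to Liu and left at that. Your direct computation in cylindrical coordinates is correct and is the standard separation-of-variables argument for an operator commuting with the $SO(2)$-action: the metric $ds^2+r^2\,d\theta^2$, the Laplacian formula, and the vanishing $\langle x,\partial_\theta\rangle=0$ are all verified correctly, and together they show that the only $\theta$-derivative entering $\L_\Sigma$ is $r^{-2}\partial_\theta^2$, which produces exactly the $-k^2/r^2$ shift.

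One minor simplification: your appeal to equation~\eqref{eq:l0} to justify identifying the $k=0$ coefficient with $L_0u$ is unnecessary. The paper \emph{defines} $L_0$ by $L_0u=L_\Sigma f$ where $f$ is the rotationally symmetric extension of $u$, so the identification is immediate from the definition rather than something to be checked against the second-variation formula.
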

Thus, the eigenfunctions of $L_\Sigma$ are of the form $u\cos k\theta$ and $u\sin k\theta$, where $u$ is an eigenfunction of $L_k$. Consequently, we can determine the eigenvalues and eigenfunctions of the stability operator $L_\Sigma$ by determining the eigenvalues and eigenfunctions of $L_k$ for all $k\ge0$.

In the case where $\Sigma$ is a rotationally symmetric torus, we have the following formula for $L_k$.
\begin{proposition}[\cite{bk20a}]\label{prop:lk}
  Let $\Sigma$ be a rotationally symmetric torus with cross-section $\Gamma$. Then
  \begin{equation*}
    L_k=\sigma\Delta_\Gamma^\sigma\sigma+1+\frac{1-k^2}{r^2},
  \end{equation*}
  where $\sigma$ is the weight $\frac12re^{-\abs x^2/4}$ and $\Delta_\Gamma^\sigma$ is the Laplacian on $\Gamma$ with respect to the conformally changed metric $g^\sigma=\sigma^2(dr^2+dz^2)$.
\end{proposition}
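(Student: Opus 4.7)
The plan is to work out $L_0$ first from its defining relation~\eqref{eq:l0}, then obtain the general $L_k$ from the identity $L_k=L_0-k^2/r^2$. The left-hand side of~\eqref{eq:l0} is the second variation of the $g^\sigma$-length of $\Gamma$, and by Proposition~\ref{prop:shrinkercritical} $\Gamma$ is a $g^\sigma$-geodesic, so the standard Jacobi-field second variation formula for a geodesic in a Riemannian surface applies. Because $\Sigma$ is a torus, $\Gamma$ is a simple closed curve lying in the open half-plane $\{r>0\}$, so no boundary terms appear.

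First, I would convert the Euclidean-normal variation $\Gamma_s=\{x+su\n\mid x\in\Gamma\}$ into intrinsic $g^\sigma$ language. Since $\n$ has $g^\sigma$-length $\sigma$, the $g^\sigma$-unit normal is $\sigma^{-1}\n$, so the variation vector field $u\n$ is a $g^\sigma$-normal vector field of magnitude $\tilde u=\sigma u$. Let $t$ denote the $g^\sigma$-arc length parameter on $\Gamma$, so $dt=\sigma\,d\ell$; writing $K^\sigma$ for the Gauss curvature of $g^\sigma$ restricted to $\Gamma$, the second variation formula for a closed geodesic gives
\begin{equation*}
  \ddtwo s\Bigr\rvert_{s=0}\int_{\Gamma_s}\sigma\,d\ell
  =\int_\Gamma\bigl[(\partial_t\tilde u)^2-K^\sigma\tilde u^2\bigr]\,dt
  =\int_\Gamma\tilde u\bigl(-\Delta_\Gamma^\sigma-K^\sigma\bigr)\tilde u\,dt.
\end{equation*}
Substituting $\tilde u=\sigma u$ and $dt=\sigma\,d\ell$, then comparing against~\eqref{eq:l0} and using that both candidate operators are symmetric with respect to the weight $\sigma$, yields the operator identity
\begin{equation*}
  L_0=\sigma\Delta_\Gamma^\sigma\sigma+\sigma^2K^\sigma.
\end{equation*}

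Second, I would compute $\sigma^2K^\sigma$. The metric $g^\sigma=\sigma^2(dr^2+dz^2)$ is conformal to the flat metric, and in dimension two the conformal change formula for Gauss curvature reads $K^\sigma=-\sigma^{-2}\Delta_{\R^2}(\ln\sigma)$. Using $\ln\sigma=\ln\tfrac12+\ln r-\tfrac14(r^2+z^2)$, a direct calculation gives $\Delta_{\R^2}\ln\sigma=-1-1/r^2$, hence $\sigma^2K^\sigma=1+1/r^2$. Substituting back,
\begin{equation*}
  L_0=\sigma\Delta_\Gamma^\sigma\sigma+1+\frac1{r^2},
\end{equation*}
and the claimed formula for $L_k$ is immediate from $L_k=L_0-k^2/r^2$.

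The main delicate step is the first one: correctly tracking three independent appearances of the factor $\sigma$ when translating between Euclidean and $g^\sigma$ data (the length scaling of the normal vector giving $\tilde u=\sigma u$, the arc-length scaling $dt=\sigma\,d\ell$, and the weight $\sigma\,d\ell$ in~\eqref{eq:l0}), so that the operator identity $L_0=\sigma\Delta_\Gamma^\sigma\sigma+\sigma^2K^\sigma$ emerges with the correct placement of every $\sigma$. Once this bookkeeping is pinned down, the conformal Gauss curvature computation and the algebra that follows are routine.
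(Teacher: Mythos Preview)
The paper does not prove this proposition; it simply cites the companion paper \cite{bk20a} for the proof (and points to \cite{l16} for the general formula), so there is nothing in the present text to compare your argument against directly.

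That said, your argument is correct. The key observations are exactly the ones you identify: (i) since $\Gamma$ is a closed $g^\sigma$-geodesic (no boundary, $r>0$ everywhere on a torus cross-section), the second variation of $g^\sigma$-length depends only on the variation field at $s=0$, so the Euclidean normal variation $x+su\n$ may be used in place of the $g^\sigma$-exponential variation; (ii) the conformal relation gives $\tilde u=\sigma u$ for the $g^\sigma$-normal component and $dt=\sigma\,d\ell$ for arc length; (iii) matching $\int_\Gamma \tilde u(-\Delta_\Gamma^\sigma-K^\sigma)\tilde u\,dt$ against $\int_\Gamma u(-L_0)u\,\sigma\,d\ell$ and using symmetry yields $L_0=\sigma\Delta_\Gamma^\sigma\sigma+\sigma^2K^\sigma$. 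Your computation $\Delta_{\R^2}\ln\sigma=-1-r^{-2}$, hence $\sigma^2K^\sigma=1+r^{-2}$, is also correct, and the passage to $L_k$ via $L_k=L_0-k^2/r^2$ is immediate. This is precisely the route one would expect the companion paper to take, and your bookkeeping of the three $\sigma$ factors is accurate.
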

See also Appendix~\ref{sec:asymptotics}, where we rewrite $L_k$ as a Schr\"odinger operator, and \cite{l16}, where Liu gives the general formula for $L_k$. 

\section{Numerical methods}\label{sec:methods}
In \cite{bk19}, we computed the Angenent torus cross-section $\Gamma$ by viewing it as a geodesic with respect to the metric $g^\sigma=\sigma^2(dr^2+dz^2)$. The result is a discrete approximation $\Gamma_d=\{q_0,q_1\dotsc,q_M=q_0\}$ to the curve $\Gamma$, where the $q_m$ are equally spaced with respect to the metric $g^\sigma$, as illustrated in Figure \ref{fig:curvedots}. As we will see, having the points $q_m$ be equally spaced in this way is particularly well-suited for computing the index of the Angenent torus. Additionally, we computed the entropy $F(\Sigma)$ of the Angenent torus, which is the length of the curve $\Gamma$ with respect to the metric $g^\sigma$.

\begin{figure}
  \centering
  \includegraphics{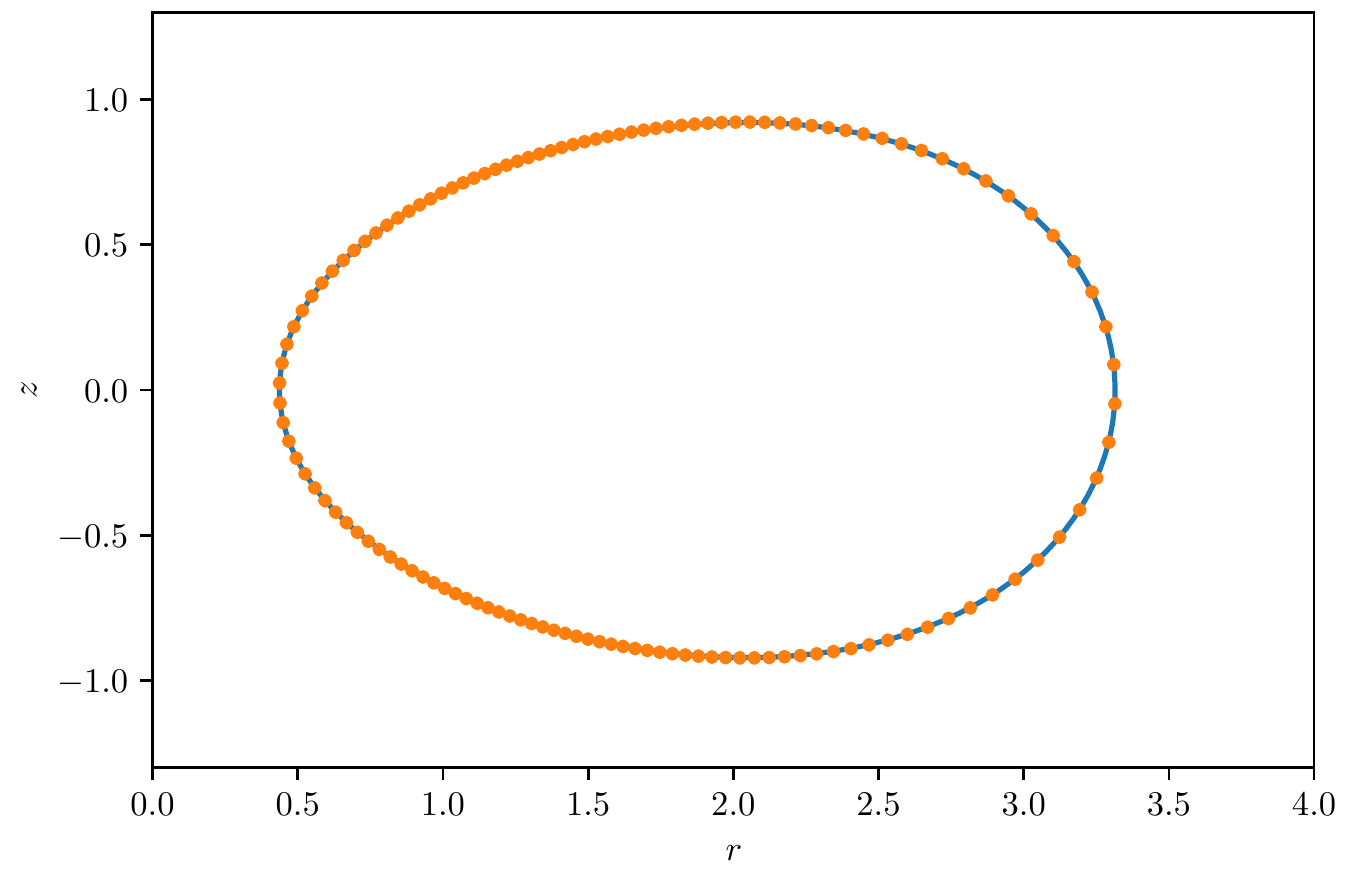}
  \caption{The Angenent torus cross-section (blue curve) and the discrete approximation to it (orange dots) computed in \cite{bk19}.}
  \label{fig:curvedots}
\end{figure}

We now proceed to compute a matrix approximation to the differential operator $L_0$, using equation~\eqref{eq:l0}. We can first rewrite this equation in terms of the metric $g^\sigma$.
\begin{proposition}
  Let $\Gamma$ be the cross-section of a self-shrinking torus, and let $u\colon\Gamma\to\R$. Then
  \begin{equation}\label{eq:l0length}
    \ddtwo s\Bigr\rvert_{s=0}\ell^\sigma(\Gamma_s)=\int_\Gamma u(-L_0)u\,d\ell^\sigma,
  \end{equation}
  where $\Gamma_s=\{x+su\n\mid x\in\Gamma\}$ is the normal variation corresponding to $u$, the expression $\ell^\sigma(\Gamma_s)$ denotes the length of the curve $\Gamma_s$ with respect to the conformally changed metric $g^\sigma=\sigma^2(dr^2+dz^2)$, and $d\ell^\sigma$ denotes integration with respect to $g^\sigma$ arc length.
\end{proposition}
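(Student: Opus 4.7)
The plan is to recognize that this proposition is a pure notational rewriting of equation~\eqref{eq:l0}, which has already been stated in terms of the Euclidean arc length $d\ell$ and the weight $\sigma$. The content is just to replace those with their $g^\sigma$-geometric counterparts.

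The key observation is that for any curve $C$ in the half-plane $\{(r,z)\mid r\ge0, z\in\R\}$, the $g^\sigma$ arc length element is related to the Euclidean arc length element by $d\ell^\sigma=\sigma\,d\ell$. This follows immediately from the definition $g^\sigma=\sigma^2(dr^2+dz^2)$: if $\gamma(t)=(r(t),z(t))$ parametrizes $C$, then
\begin{equation*}
  d\ell^\sigma=\sqrt{g^\sigma(\dot\gamma,\dot\gamma)}\,dt=\sigma(\gamma(t))\sqrt{\dot r^2+\dot z^2}\,dt=\sigma\,d\ell.
\end{equation*}

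Applying this identity to $\Gamma_s$ gives $\ell^\sigma(\Gamma_s)=\int_{\Gamma_s}\sigma\,d\ell$, which is precisely the quantity differentiated on the left-hand side of~\eqref{eq:l0}. Applying it to $\Gamma$ itself gives $d\ell^\sigma=\sigma\,d\ell$ on $\Gamma$, so the right-hand side of~\eqref{eq:l0} becomes $\int_\Gamma u(-L_0)u\,d\ell^\sigma$. Substituting both rewritings into~\eqref{eq:l0} yields~\eqref{eq:l0length}. There is no real obstacle: the proposition is not a new computation but a change of notation that packages the already-established identity in the conformal-geometry language used in the remainder of the paper.
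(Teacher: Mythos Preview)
Your proposal is correct and matches the paper's own treatment: the paper introduces the proposition with the phrase ``We can first rewrite this equation in terms of the metric $g^\sigma$,'' indicating that it is nothing more than a notational repackaging of equation~\eqref{eq:l0} via $d\ell^\sigma=\sigma\,d\ell$. The paper gives no further proof, and your justification of that identity is exactly the missing detail.
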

In other words, with respect to the metric $g^\sigma$, the operator $-L_0$ is the Hessian of the length functional.

The task now is to make discrete approximations to all of the terms in equation~\eqref{eq:l0length}. We begin by approximating length the same way as in \cite{bk19}.

\subsection{Discrete length and its Hessian}
\begin{definition}
  Given two points $q_m$ and $q_{m+1}$, we approximate the distance between them with respect to the metric $g^\sigma$ by setting $\sigma_{\text{mid}}$ to be $\sigma$ evaluated at the midpoint $(q_m+q_{m+1})/2$, and then computing the discrete approximation to the distance to be
  \begin{equation}\label{eq:discretedistance}
    \dist^\sigma_d(q_m,q_{m+1}):=\sigma_{\text{mid}}\norm{q_{m+1}-q_m},
  \end{equation}
  where $\norm\cdot$ denotes the usual Euclidean norm.
\end{definition}

\begin{definition}
  Given a discrete curve $\Gamma_d=\{q_0,q_1,\dotsc,q_M=q_0\}$ that approximates a curve $\Gamma$, we approximate its length $\ell^\sigma(\Gamma)$ with the discrete length functional
  \begin{equation}\label{eq:discretelength}
    \ell^\sigma_d(\Gamma_d)=\sum_{m=0}^{M-1}\dist^\sigma_d(q_m,q_{m+1}).
  \end{equation}
\end{definition}

The computation of the Angenent torus cross-section $\Gamma_d=\{q_0,q_1,\dotsc,q_M=q_0\}$ in \cite{bk19} is set up so that $\Gamma_d$ is a critical point for the functional $\ell^\sigma_d$, analogously to the fact that the true cross-sectional curve $\Gamma$ is a critical point for the length functional $\ell^\sigma$. The corresponding critical value $\ell^\sigma_d(\Gamma_d)$ is an approximation of $\ell^\sigma(\Gamma)$, which is the entropy of the Angenent torus.

Thus, we can approximate the left-hand side of \eqref{eq:l0length} with
\begin{equation*}
  \ddtwo s\Bigr\rvert_{s=0}\ell^\sigma(\Gamma_s)\approx\ddtwo s\Bigr\rvert_{s=0}\ell_d^\sigma(\Gamma_{d;s}),
\end{equation*}
where $\Gamma_{d;s}$ is a variation of the discrete curve $\Gamma_d$. More precisely, $\Gamma_{d;s}=\{q_{0;s},q_{1;s},\dotsc,q_{M;s}=q_{0;s}\}$, where $q_{m;s}=q_m+sv_m$ for a sequence of vectors $v_0,v_1,\dotsc,v_M=v_0$ representing a discrete vector field $v_d$ on $\Gamma_d$.

We can think of $\ddtwo s\bigr\rvert_{s=0}\ell_d^\sigma(\Gamma_{d;s})$ as giving us the Hessian of $\ell^\sigma_d(q_0,q_1\dotsc,q_M=q_0)$.
\begin{definition}\label{def:hd}
  Let $\Gamma_d$ be a discrete curve. Viewing $\ell^\sigma_d$ as a function $(\R^2)^M=\R^{2M}\to\R$, let $H_d$ denote the Hessian of $\ell^\sigma_d$ at $\Gamma_d$, a $2M\times2M$ matrix.
\end{definition}
Then, by definition of the Hessian, if we view $v_d$ as a vector in $(\R^2)^M=\R^{2M}$, we have
\begin{equation}\label{eq:hessvd}
  \ddtwo s\Bigr\rvert_{s=0}\ell_d^\sigma(\Gamma_{d;s})=v_d^TH_dv_d.
\end{equation}

\subsection{The outward unit normal of a discrete curve}
Recall, however, that we restricted our attention to normal variations of $\Gamma$, because tangential variations do not change length. In other words, we considered only those variations $\Gamma_s=\{x+sv\mid x\in\Gamma\}$ where $v=u\n$ for some function $u\colon\Gamma\to\R$. To do the same for our discrete variations $v_d$ of our discrete curve $\Gamma_d$, we must appropriately define a normal vector field $\n_d=\{\n_0,\n_1,\dotsc,\n_M=\n_0\}$.

In this situation, there is a very natural way to do so by considering what happens to the discrete length $\ell^\sigma_d$ if we vary a single point $q_m$ of $\Gamma_d$ while leaving all of the other points fixed. Because $\Gamma_d$ is a critical point for $\ell^\sigma_d$, the first derivative is zero. Meanwhile, the second derivative is the $m$th $2\times2$ submatrix on the diagonal of the Hessian $H_d$, which we denote by $H_m$.

We expect that if we move $q_m$ in a tangential direction, either towards $q_{m-1}$ or $q_{m+1}$, then $\ell^\sigma_d$ will not change very much. On the other hand, if we move $q_m$ in a normal direction, then $\ell^\sigma_d$ will increase. Thus, we expect $H_m$ to have an eigenvalue close to zero, whose eigenvector approximates the direction tangent to the curve, and a positive eigenvalue, whose eigenvector approximates the normal direction tangent to the curve. We obtain the following definition of the normal vector field $\n_d$.

\begin{definition}\label{def:nm}
  Let $\Gamma_d$ be a discrete curve. Let $H_1,\dotsc,H_M$ be the $2\times2$ blocks along the diagonal of $H_d$. We define the \emph{outward unit normal vector field} $\n_d=\{\n_0,\n_1,\dotsc,\n_M=\n_0\}$ by letting $\n_m$ be the unit eigenvector corresponding to the larger eigenvalue of $H_m$, with the sign chosen so that $\n_m$ points outwards.
\end{definition}

A discrete function $u_d\colon\Gamma_d\to\R$ is just a discrete set of values $u_0,u_1,\dotsc,u_M=u_0$, with $u_m:=u_d(q_m)$. Now that we have our outward unit normal vector field, we can define $v_m=u_m\n_m$. Viewing $u_d$ as a vector in $\R^M$ and $v_d$ as a vector in $\R^{2M}$, this formula defines a linear transformation $\mbf N\colon\R^M\to\R^{2M}$.
\begin{definition}
  The matrix $\mbf N$ is an $2M\times M$ block diagonal matrix whose $2\times1$ diagonal blocks are the $\n_m$.
\end{definition}
Using $\mbf N$, we can rewrite equation~\eqref{eq:hessvd} as
\begin{equation}\label{eq:hessud}
  \ddtwo s\Bigr\rvert_{s=0}\ell^\sigma_d(\Gamma_{d;s})=u_d^T\left(\mbf N^TH_d\mbf N\right)u_d,
\end{equation}
where the variation $\Gamma_{d;s}$ is defined by $q_{m;s}=q_m+su_m\n_m$.

\subsection{Integrating over a discrete curve}
We now turn to the right-hand side of \eqref{eq:l0length}. To approximate it, we must first understand how to approximate integration with respect to $g^\sigma$ arc length, which is relatively straightforward because the points $q_m$ are equally spaced with respect to $g^\sigma$ arc length. We can view a discrete function $u_d\colon\Gamma_d\to\R$ as an approximation of a function $u\colon\Gamma\to\R$. Intuitively, because the $q_m$ are equally spaced, the $u_m$ should be weighted equally, so the average value of the $u_m$ should approximate the average value of $u$. In other words, we expect
\begin{equation*}
  \frac1{\ell^\sigma(\Gamma)}\int_\Gamma u\,d\ell^\sigma\approx\frac1M\sum_{m=0}^{M-1}u_m.
\end{equation*}
To understand this approximation more precisely, we can let $q(t)$ denote the parametrization of $\Gamma$ with respect to $g^\sigma$ arc length. Because the $q_m$ are equally spaced, setting $\Delta t=\ell^\sigma(\Gamma)/M$, we have that $q_m$ is an approximation for $q(m\Delta t)$, and hence $u_m$ is an approximation for $u(q(m\Delta t))$. Thus,
\begin{multline*}
  \int_\Gamma u\,d\ell^\sigma=\int_0^{\ell^\sigma(\Gamma)}u(q(t))\,dt=\sum_{m=0}^{M-1}\int_{\left(m-\frac12\right)\Delta t}^{\left(m+\frac12\right)\Delta t}u(q(t))\,dt\\
  \approx\sum_{m=0}^{M-1}u(q(m\Delta t))\Delta t\approx\sum_{m=0}^{M-1}u_m\frac{\ell^\sigma(\Gamma)}M\approx\frac{\ell^\sigma_d(\Gamma_d)}M\sum_{m=0}^{M-1}u_m.
\end{multline*}

\subsection{The discrete stability operator}
Applying these approximations to \eqref{eq:l0length}, we can approximate $L_0$ with a discrete operator $L_{0;d}$.
\begin{definition}
  Viewing a discrete function $u_d=\{u_0,u_1,\dotsc,u_M=u_0\}$ as an element of $\R^M$, let $L_{0;d}$ be the $M\times M$ symmetric matrix satisfying
  \begin{equation*}
    \ddtwo s\Bigr\rvert_{s=0}\ell^\sigma_d(\Gamma_{d;s})=\frac{\ell^\sigma_d(\Gamma_d)}M\sum_{m=0}^{M-1}u_m(-L_{0;d}u_d)_m
  \end{equation*}
  for all $u_m$, where $\Gamma_{d;s}$ is defined by $q_{m;s}=q_m+su_m\n_m$.
\end{definition}

\begin{proposition}
  \begin{equation*}
    -L_{0;d}=\frac M{\ell^\sigma_d(\Gamma_d)}\mbf N^TH_d\mbf N.
  \end{equation*}
\end{proposition}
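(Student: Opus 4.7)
The plan is to equate two expressions for the same quantity, $\ddtwo s\bigr\rvert_{s=0}\ell^\sigma_d(\Gamma_{d;s})$, and then use symmetry to deduce equality of the matrices.

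First I would recall that by equation~\eqref{eq:hessud}, for any discrete function $u_d\in\R^M$ with associated variation $q_{m;s}=q_m+su_m\n_m$, we have
\begin{equation*}
  \ddtwo s\Bigr\rvert_{s=0}\ell^\sigma_d(\Gamma_{d;s})=u_d^T\left(\mbf N^TH_d\mbf N\right)u_d.
\end{equation*}
On the other hand, by the defining property of $L_{0;d}$, the same quantity equals
\begin{equation*}
  \frac{\ell^\sigma_d(\Gamma_d)}M\sum_{m=0}^{M-1}u_m(-L_{0;d}u_d)_m=\frac{\ell^\sigma_d(\Gamma_d)}Mu_d^T(-L_{0;d})u_d.
\end{equation*}

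Setting these equal, I obtain $u_d^T(\mbf N^TH_d\mbf N)u_d=\frac{\ell^\sigma_d(\Gamma_d)}Mu_d^T(-L_{0;d})u_d$ for every $u_d\in\R^M$. Now the key observation is that both matrices are symmetric: $\mbf N^TH_d\mbf N$ is symmetric because $H_d$ is a Hessian, hence symmetric, and $L_{0;d}$ is symmetric by definition. Since a symmetric matrix is uniquely determined by its associated quadratic form (via the polarization identity, applied to $u_d+u_d'$ and $u_d-u_d'$), the equality of quadratic forms upgrades to equality of matrices, yielding
\begin{equation*}
  -L_{0;d}=\frac M{\ell^\sigma_d(\Gamma_d)}\mbf N^TH_d\mbf N.
\end{equation*}

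There is no substantial obstacle here; the statement is essentially a bookkeeping consequence of the two preceding definitions together with the polarization identity. The only thing one might worry about is making sure that the variation $q_{m;s}=q_m+su_m\n_m$ used to define $L_{0;d}$ matches the variation $v_m=u_m\n_m$ encoded by the matrix $\mbf N$ in equation~\eqref{eq:hessud}; this is immediate from the definition of $\mbf N$ as the block-diagonal matrix with blocks $\n_m$.
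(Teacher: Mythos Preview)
Your proof is correct and follows essentially the same approach as the paper's own proof: equate the two expressions for $\ddtwo s\bigr\rvert_{s=0}\ell^\sigma_d(\Gamma_{d;s})$ coming from equation~\eqref{eq:hessud} and from the definition of $L_{0;d}$, then invoke symmetry to pass from equality of quadratic forms to equality of matrices. Your write-up is slightly more explicit about the polarization step, but the argument is the same.
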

\begin{proof}
  Using equation~\eqref{eq:hessud} for the left-hand side and rewriting the right-hand side, we have
  \begin{equation*}
    u_d^T\left(\mbf N^TH_d\mbf N\right)u_d=u_d^T\left(-\frac{\ell^\sigma_d(\Gamma_d)}ML_{0;d}\right)u_d.
  \end{equation*}
  The result follows using the fact that the Hessian $H_d$ is symmetric.
\end{proof}

From here, it is easy to approximate the $k$th Fourier component of the stability operator. Recall that $-L_k=-L_0+\frac{k^2}{r^2}$. Letting $r_m$ denote the $r$-coordinate of $q_m$, we can approximate the operator $\frac{k^2}{r^2}$ with the diagonal matrix whose entries are $\frac{k^2}{r_m^2}$.
\begin{definition}
  Let $L_{k;d}$ be the $M\times M$ matrix defined by
  \begin{equation*}
    -L_{k;d}=-L_{0;d}+k^2R_d^{-2},
  \end{equation*}
  where $R_d$ is the diagonal $M\times M$ matrix whose entries are the $r_m$.
\end{definition}

By computing the eigenvalues of these matrix approximations $L_{k;d}$ of $L_k$, we can estimate the eigenvalues of $L_k$. Counting the number of negative eigenvalues will give us the index of the Angenent torus.

\subsection{Implementation}
In practice, we do not compute the Hessian $H_d$ all at once. Instead, our first step is to use \texttt{sympy} to symbolically compute the Hessian of $\dist^\sigma_d\colon\R^2\times\R^2\to\R$, giving us a $4\times4$ matrix of expressions. Then, for each $m$, we evaluate these expressions at $(q_m,q_{m+1})$, giving us a $4\times4$ matrix that we denote by $H_{m,m+1}$. While we could assemble the $H_{m,m+1}$ into the full matrix $H_d$ by placing the $4\times4$ blocks $H_{m,m+1}$ in appropriate locations along the diagonal and adding them together, we instead first compute the normal vectors and use them to reduce the dimension of the problem.

Recall that we compute the normal vector $\n_m$ by seeing how $\ell^\sigma_d(\Gamma_d)$ changes as we vary the single point $q_m$. Since $\dist^\sigma_d(q_{m-1},q_m)+\dist^\sigma_d(q_m,q_{m+1})$ are the only two terms in $\ell^\sigma_d(\Gamma_d)$ where $q_m$ appears, we only need to know $H_{m-1,m}$ and $H_{m,m+1}$. Adding the bottom right $2\times2$ block of $H_{m-1,m}$ to the top left $2\times2$ block of $H_{m,m+1}$, we obtain the $2\times2$ matrix $H_m$ from Definition~\ref{def:nm}, so $\n_m$ is the unit eigenvector corresponding to the larger eigenvalue of this $2\times2$ matrix.

Next, we would like to reduce the dimension of $H_{m,m+1}$ by considering normal variations only. To do so, we assemble $\n_m$ and $\n_{m+1}$ into a $4\times2$ block diagonal matrix $\mbf N_{m,m+1}$, from which we obtain the $2\times2$ matrix
\begin{equation*}
  -L_{0;m,m+1}:=\frac M{\ell^\sigma_d(\Gamma_d)}\mbf N_{m,m+1}^TH_{m,m+1}\mbf N_{m,m+1}.
\end{equation*}
Essentially, $-L_{0;m,m+1}$ represents how the distance $\dist^\sigma_d(q_m,q_{m+1})$ changes if we vary $q_m$ and $q_{m+1}$ in the normal directions, weighted by $\frac M{\ell^\sigma_d(\Gamma_d)}\approx\dist^\sigma_d(q_m,q_{m+1})^{-1}$.

Finally, we assemble the $2\times2$ matrices $-L_{0;m,m+1}$ into the $M\times M$ matrix $-L_{0;d}$ by placing $-L_{0;m,m+1}$ into the $2\times2$ block formed by the $m$th and $(m+1)$st rows and columns, and then summing over $m$. From here, it is an easy matter to obtain $-L_{k;d}:=-L_{0;d}+k^2R_d^{-2}$ by using the $r$-coordinates of the $q_m$ to assemble the diagonal matrix $R_d^{-2}$. Once we have the matrices $-L_{k;d}$, we can compute their eigenvalues and eigenvectors with \texttt{numpy}.

See the supplementary materials for a Jupyter notebook implementation. 

\section{Results}\label{sec:results}
\begin{figure}
  \centerline{\includegraphics[trim = 0 6.8in 0 0, clip]{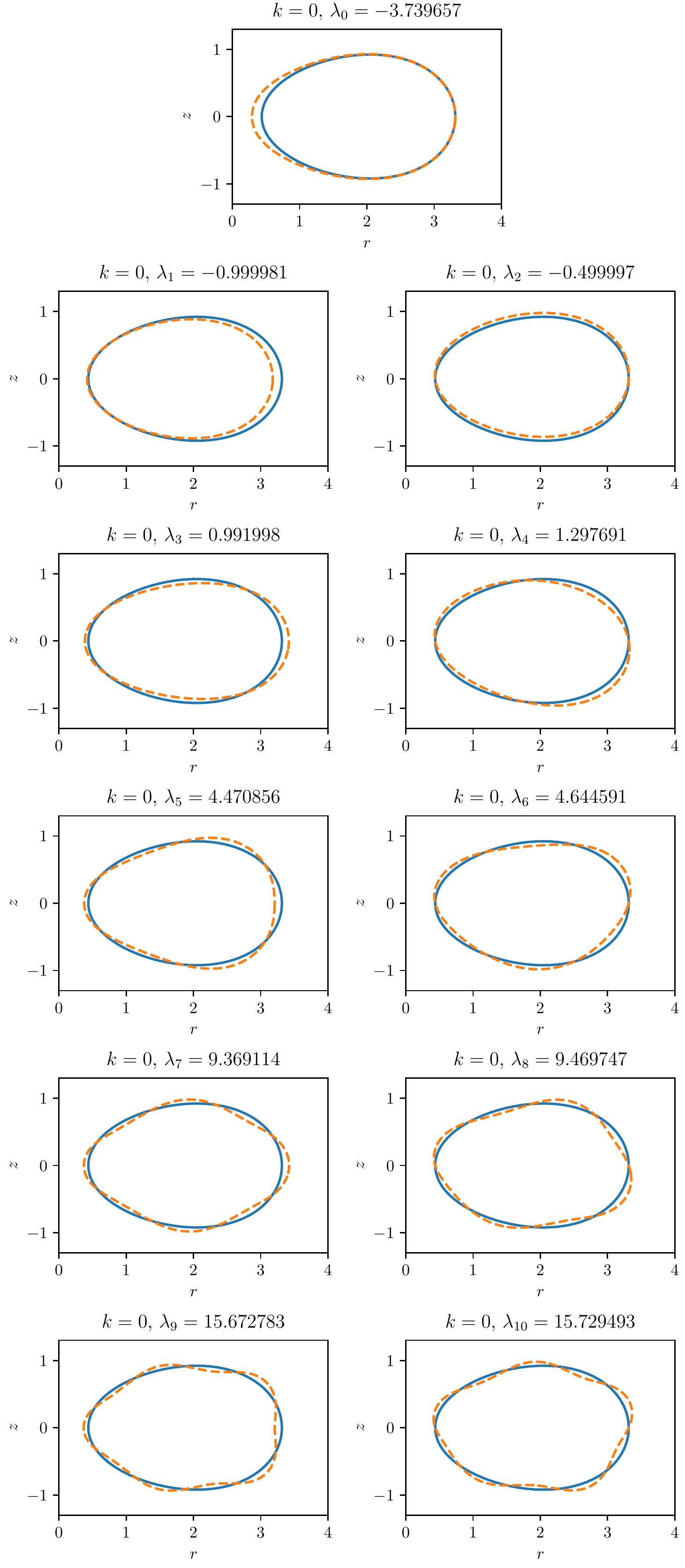}}
  \caption{The first few eigenvalues and eigenfunctions of $L_0$. The eigenfunctions are pictured as variations (orange dashed) of the Angenent torus cross-section (blue solid). The variation corresponding to $\lambda_1=-1$ is dilation, and the variation corresponding to $\lambda_2=-\frac12$ is vertical translation.}
  \label{fig:k0}
\end{figure}

\begin{figure}
  \centerline{\includegraphics{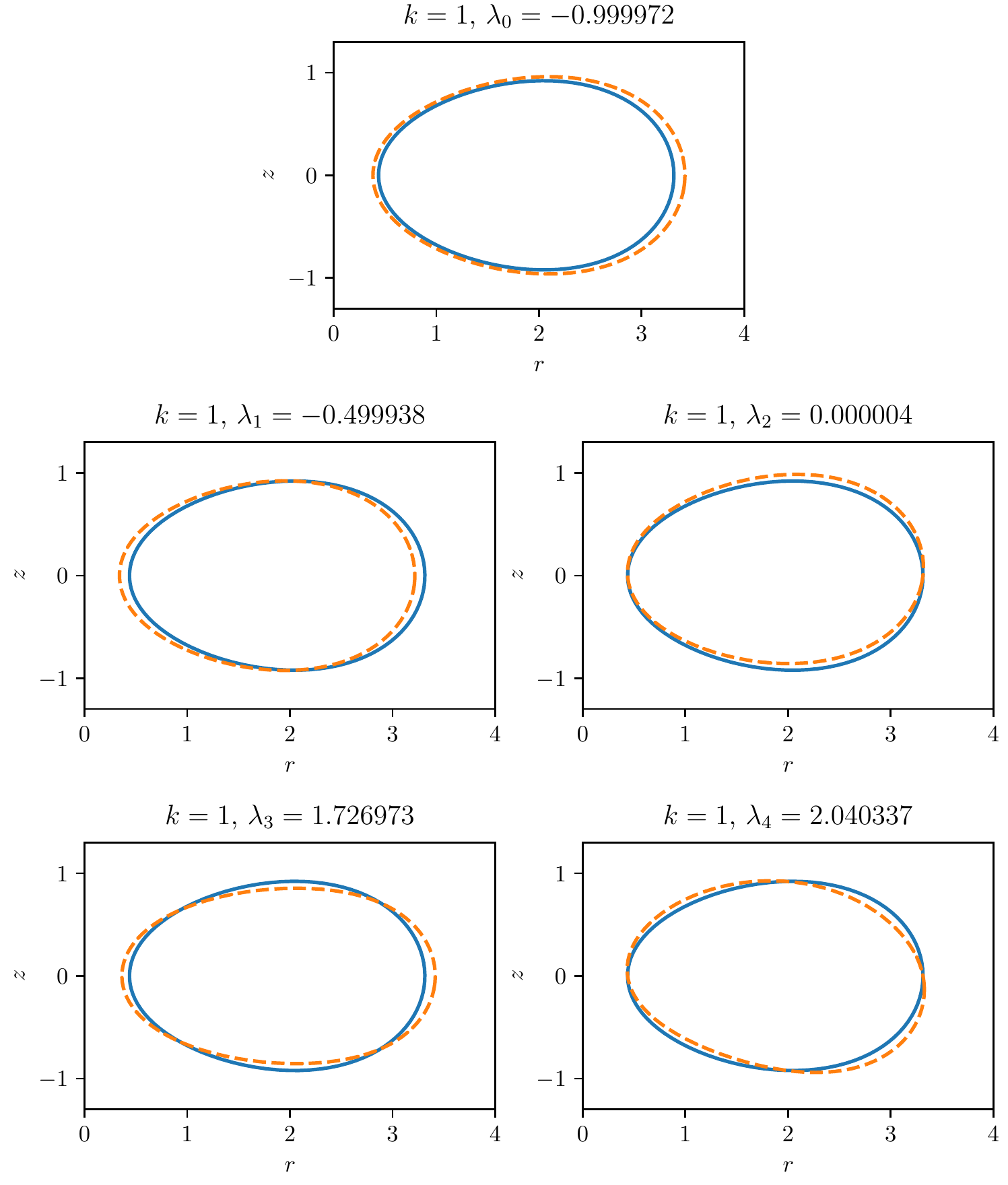}}
  \caption{The first few eigenvalues and eigenfunctions of $L_1$. The variation corresponding to $\lambda_0=-1$ is $\sigma^{-1}$ \cite[Section 6]{bk20a}. The variation corresponding to $\lambda_1=-\frac12$ is horizontal translation, and the variation corresponding to $\lambda_2=0$ is rotation about the origin.}
  \label{fig:k1}
\end{figure}
\begin{figure}
  \centerline{\includegraphics{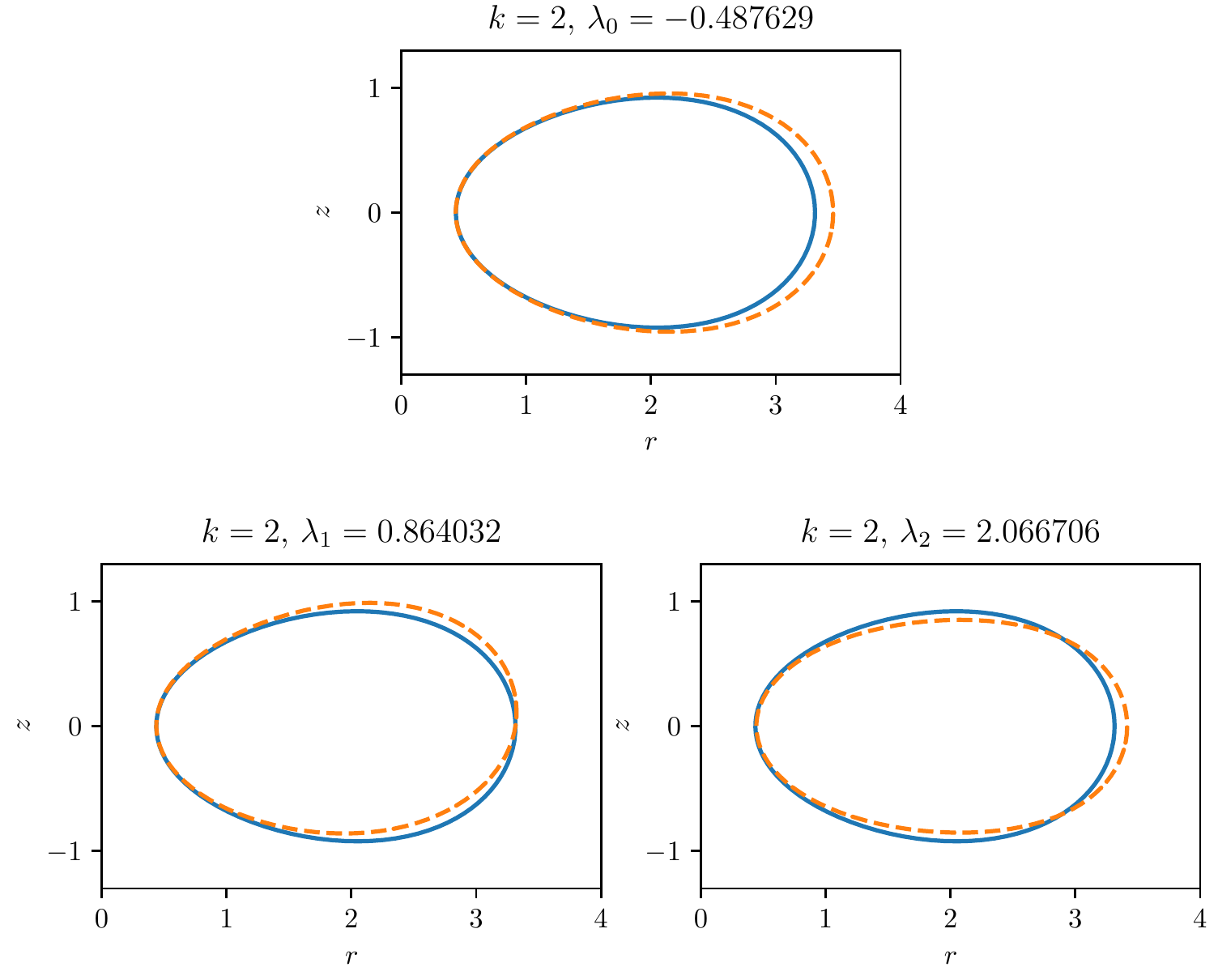}}
  \caption{The first few eigenvalues and eigenfunctions of $L_2$.}
  \label{fig:k2}
\end{figure}
We estimate the first few eigenvalues and eigenfunctions of $L_k$ by computing the eigenvalues and eigenvectors of $L_{k;d}$ with $M=2048$. We present these variations of the Angenent torus cross-section in Figures~\ref{fig:k0}--\ref{fig:k2}. Recall that for $k>0$, each eigenfunction $u$ of $L_k$ corresponds to two eigenfunctions $u\cos k\theta$ and $u\sin k\theta$ of the stability operator $L_\Sigma$. The variations with negative eigenvalues, excluding translation and dilation, contribute to the index. We present three-dimensional plots of the variations with negative eigenvalues in Figure~\ref{fig:3d}. There are $9$ such variations. One of the variations is dilation, and three are translations, so the entropy index of the Angenent torus is $5$.

\begin{figure}
  \newcommand\plot[1]{\includegraphics[trim = 30mm 54mm 30mm 52mm, clip, scale=.6]{figures/#1\res res}}
  \newcommand\plotv[1]{\plot{variation3D#1}}
  \centerline{
    \begin{tabular}{ccc}
      \plot{torus3D}&\plot{torus3D}&\plot{torus3D}\\\hline
      $k=0$&$k=1$&$k=2$\\\hline\\
      $\lambda_0\approx-3.740$&$\lambda_0=-1$&$\lambda_0\approx-0.488$\\
      \plotv{00}&\plotv{10c}&\plotv{20c}\\
      $\lambda_1=-1$&$\lambda_0=-1$&$\lambda_0\approx-0.488$\\
      \plotv{01}&\plotv{10s}&\plotv{20s}\\
      $\lambda_2=-\frac12$&$\lambda_1=-\frac12$\\
      \plotv{02}&\plotv{11c}&\\
                    &$\lambda_1=-\frac12$\\
                &\plotv{11s}
  \end{tabular}
  }
  \caption{The Angenent torus (top row) and its variations with negative eigenvalues. In the first column, we have dilation with eigenvalue $-1$ and vertical translation with eigenvalue $-\frac12$. In the second column, we have the pair of variations with eigenvalue $-1$ discussed in \cite[Section 6]{bk20a}, and the two horizontal translations with eigenvalue $-\frac12$.}
  \label{fig:3d}
\end{figure}

\section{Error analysis}\label{sec:error}
To have confidence in our index result, we must approximate the eigenvalues of $L_k$ to sufficient accuracy to know that values we compute to be negative are indeed negative, and that the lowest computed positive eigenvalue of each $L_k$ is indeed positive. Eigenvalues close to zero could pose a problem, but the only such eigenvalue is $\lambda_2$ for $k=1$. Fortunately, for this eigenvalue, we know that its true value is exactly $\lambda_2=0$, corresponding to the variations that rotate the Angenent torus about the $x$ or $y$ axes.

To estimate the error in our values, we perform our computation with different numbers of points $M$. Specifically, we use $M\in\{128, 256, 512, 1024, 2048\}$. When we know the true value, we can directly see how fast our estimate converges by plotting the logarithm of the error against the logarithm of $M$. When the true value is unknown, we estimate it with the value that results in the best linear fit on a log-log plot. We present our results in Figure~\ref{fig:errorplots} and Table~\ref{tab:errorplots}

\begin{figure}
  \centerline{\includegraphics[scale=.9]{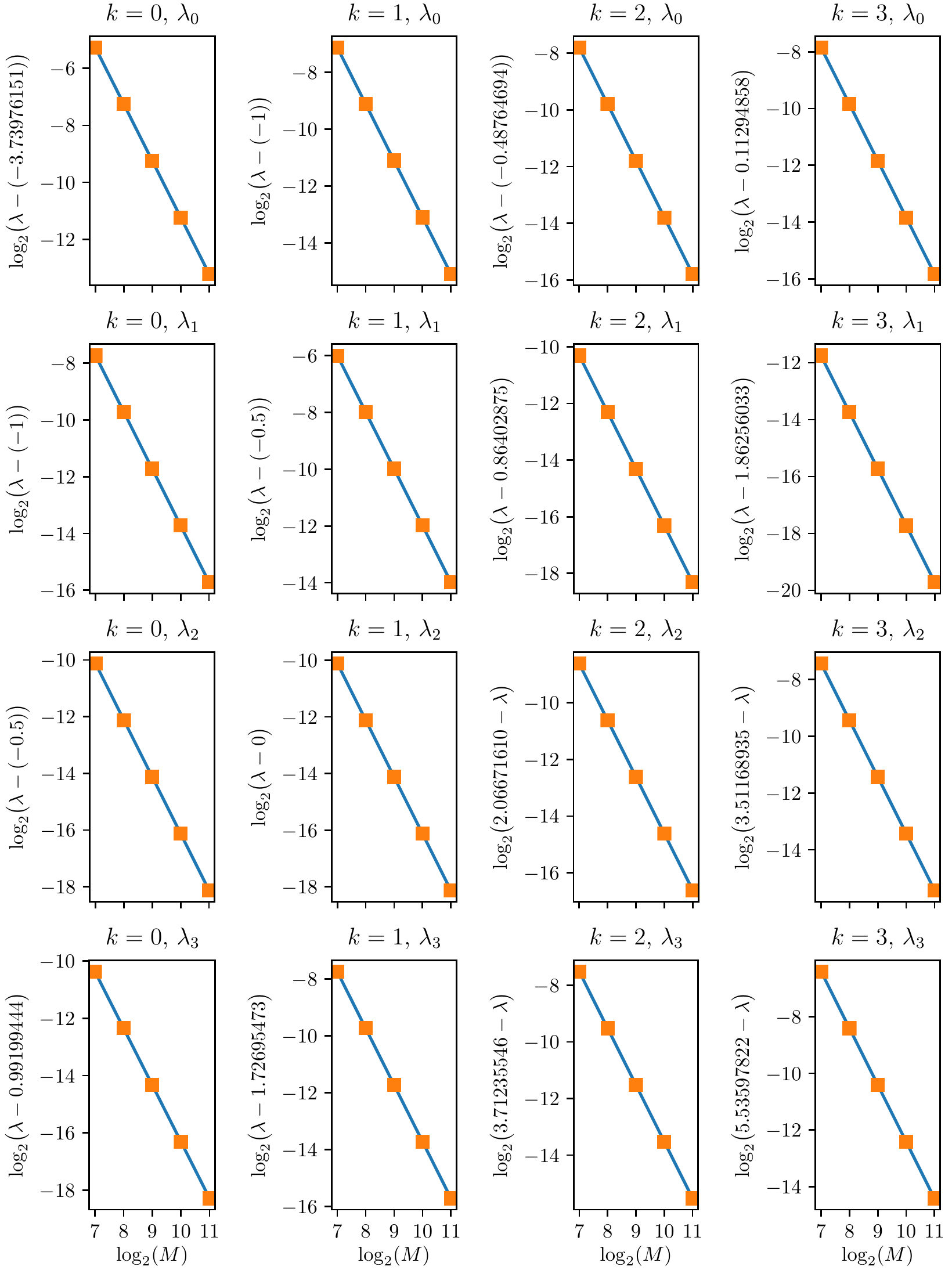}}
  \caption{Log-log plots showing the rate at which the error in our eigenvalue estimates decreases as the number of points $M$ increases.}
  \label{fig:errorplots}
\end{figure}

\begin{table}
  \centering
  \begin{tabular}{c|c|ccc}
    &&\makecell{value computed\\with $M=2048$}&\makecell{true value (known or\\ estimated from fit)}&error\\\hline
    \multirow4*{$k=0$}
    &$ \lambda_{0} $& $-3.73965698$&$ -3.73976151 $&$ \phantom-1.0\times10^{-4}$\\
    &$ \lambda_{1} $&$ -0.99998145 $&$ -1 $&$ \phantom-1.9\times10^{-5}$\\
    &$ \lambda_{2} $&$ -0.49999650 $&$ -\frac12 $&$ \phantom-3.5\times10^{-6}$\\
    &$ \lambda_{3} $&$ \phantom-0.99199758 $&$ \phantom-0.99199444 $&$ \phantom-3.1\times10^{-6}$\\\hline
    \multirow4*{$k=1$}
    &$ \lambda_{0} $&$ -0.99997152 $&$ -1 $&$ \phantom-2.8\times10^{-5}$\\
    &$ \lambda_{1} $&$ -0.49993807 $&$ -\frac12 $&$ \phantom-6.2\times10^{-5}$\\
    &$ \lambda_{2} $&$ \phantom-0.00000351 $&$ \phantom-0 $&$ \phantom-3.5\times10^{-6}$\\
    &$ \lambda_{3} $&$ \phantom-1.72697331 $&$ \phantom-1.72695473 $&$ \phantom-1.9\times10^{-5}$\\\hline
    \multirow4*{$k=2$}
    &$ \lambda_{0} $&$ -0.48762926 $&$ -0.48764694 $&$ \phantom-1.8\times10^{-5}$\\
    &$ \lambda_{1} $&$ \phantom-0.86403182 $&$ \phantom-0.86402875 $&$ \phantom-3.1\times10^{-6}$\\
    &$ \lambda_{2} $&$ \phantom-2.06670611 $&$ \phantom-2.06671610 $&$ -1.0\times10^{-5}$\\
    &$ \lambda_{3} $&$ \phantom-3.71233427 $&$ \phantom-3.71235546 $&$ -2.1\times10^{-5}$\\\hline
    \multirow4*{$k=3$}
    &$ \lambda_{0} $&$ \phantom-0.11296571 $&$ \phantom-0.11294858 $&$ \phantom-1.7\times10^{-5}$\\
    &$ \lambda_{1} $&$ \phantom-1.86256149 $&$ \phantom-1.86256033 $&$ \phantom-1.2\times10^{-6}$\\
    &$ \lambda_{2} $&$ \phantom-3.51166663 $&$ \phantom-3.51168935 $&$ -2.3\times10^{-5}$\\
    &$ \lambda_{3} $&$ \phantom-5.53593246 $&$ \phantom-5.53597822 $&$ -4.6\times10^{-5}$\\
  \end{tabular}
  \vspace\baselineskip
  \caption{For the first four eigenvalues of each of the first four $L_k$, we give the error between the eigenvalue computed with $M=2048$ and either the true value if known or our estimate of the true value based on the fits in Figure~\ref{fig:errorplots}.}
  \label{tab:errorplots}
\end{table}

The slopes of the best fit lines in Figure~\ref{fig:errorplots} are all between $-1.978$ and $-2.002$, suggesting a quadratic rate of convergence. This rate of convergence matches the expected quadratic rate of convergence for the entropy of the Angenent torus that we found in \cite{bk19}. We observe that the computed eigenvalues sometimes overestimate and sometimes underestimate the true value, and that the magnitude of the error varies. In all cases, however, there is by far more than enough accuracy to determine the sign of the eigenvalues. Because the $j$th eigenvalue of $L_k$ increases with both $j$ and $k$, we know that Table~\ref{tab:errorplots} lists all of the negative eigenvalues; there can be no others.

\section{Future work}\label{sec:future}
There are several directions for future work, of varying levels of complexity.
\subsection{Other rotationally symmetric self-shrinkers}
The methods in \cite{bk19} and in this paper can be immediately applied to any other rotationally symmetric surface, of which there are infinitely many examples \cite{dk17}. Mramor's work \cite{m20} suggests that the entropy of these examples should grow to infinity. Meanwhile, our work \cite{bk20a} shows that the index should grow at least linearly with the entropy, but our upper bound on the index allows for faster growth. Numerically computing the entropies and indices of these examples could give some insight about whether these index bounds are optimal in terms of asymptotic growth rate, or if they could be improved.

\subsection{Self-shrinkers without symmetry}
Rotational symmetry allows us to reduce the dimension of the problem: Rather than computing variations of a critical surface in $\R^3$, we can compute variations of a critical curve in the half-plane, which allows us to work with ordinary differential equations rather than partial differential equations. However, by using numerical methods for working with partial differential equations, we could analyze the general problem without rotational symmetry in much the same way. Namely, we could discretize the problem by triangulating the surface. We could then approximate the $F$-functional by summing the weighted areas of the triangles, giving us a functional on a finite-dimensional space. Finally, we could compute the critical points of this functional and compute the Hessian.

\subsection{Error analysis}
We have strong numerical evidence that the values we obtained in \cite{bk19} and in this paper are accurate. When true values are known, our methods find them. Even when true values are unknown, we observe a quadratic rate of convergence as we increase the number of points. Additionally, the value of the entropy in \cite{bk19} has since been reproduced using different numerical methods \cite{bdn19}. Nonetheless, numerical evidence does not constitute a proof, so it would be good to prove error bounds on our estimates.

The starting point would be to consider our estimate $\dist^\sigma_d(q_m,q_{m+1})$ for the distance between two points $q_m$ and $q_{m+1}$ in the half-plane $Q=\{(r,z)\mid r\ge0\}$ with respect to the metric $g^\sigma$. We would like to bound the difference between this estimate and the true distance $\dist^\sigma(q_m,q_{m+1})$. One can do so by computing the Taylor polynomials of the distance squared at the diagonal of $Q\times Q$. From there, we could bound the error for the length functional, critical curve, Hessian, and so forth. One caveat is that variations of the discrete curve do a poor job of capturing variations of the true curve that are highly oscillatory, as we illustrate in Appendix~\ref{sec:asymptotics}. This issue is resolved by the fact that, as we show in \cite{bk20a}, highly oscillatory variations must increase length and therefore cannot contribute to the index.

\bibliographystyle{plain}
\bibliography{meancurvature}

\appendix
\section{Eigenvalue asymptotics}\label{sec:asymptotics}
For computing the index, we were concerned with eigenvalues $\lambda_j$ of $L_k$ for small values of $j$ and $k$. In this appendix, we go in the other direction and take a cursory look at the asymptotic behavior as either $j$ or $k$ becomes large.

In Figure~\ref{fig:k0cont}, we extend Figure~\ref{fig:k0}, showing the next few eigenvalues and eigenfunctions of $L_0$. We see that the eigenfunctions resemble the vibrational modes of a string. Meanwhile, in Figure~\ref{fig:j0}, we show the eigenfunction corresponding to the least eigenvalue of $L_k$ for the first several values of $k$. We see that the eigenfunctions become concentrated at the outermost point of the torus cross-section.
\begin{figure}
  \centerline{\includegraphics[trim = 0 0 0 6.7in, clip]{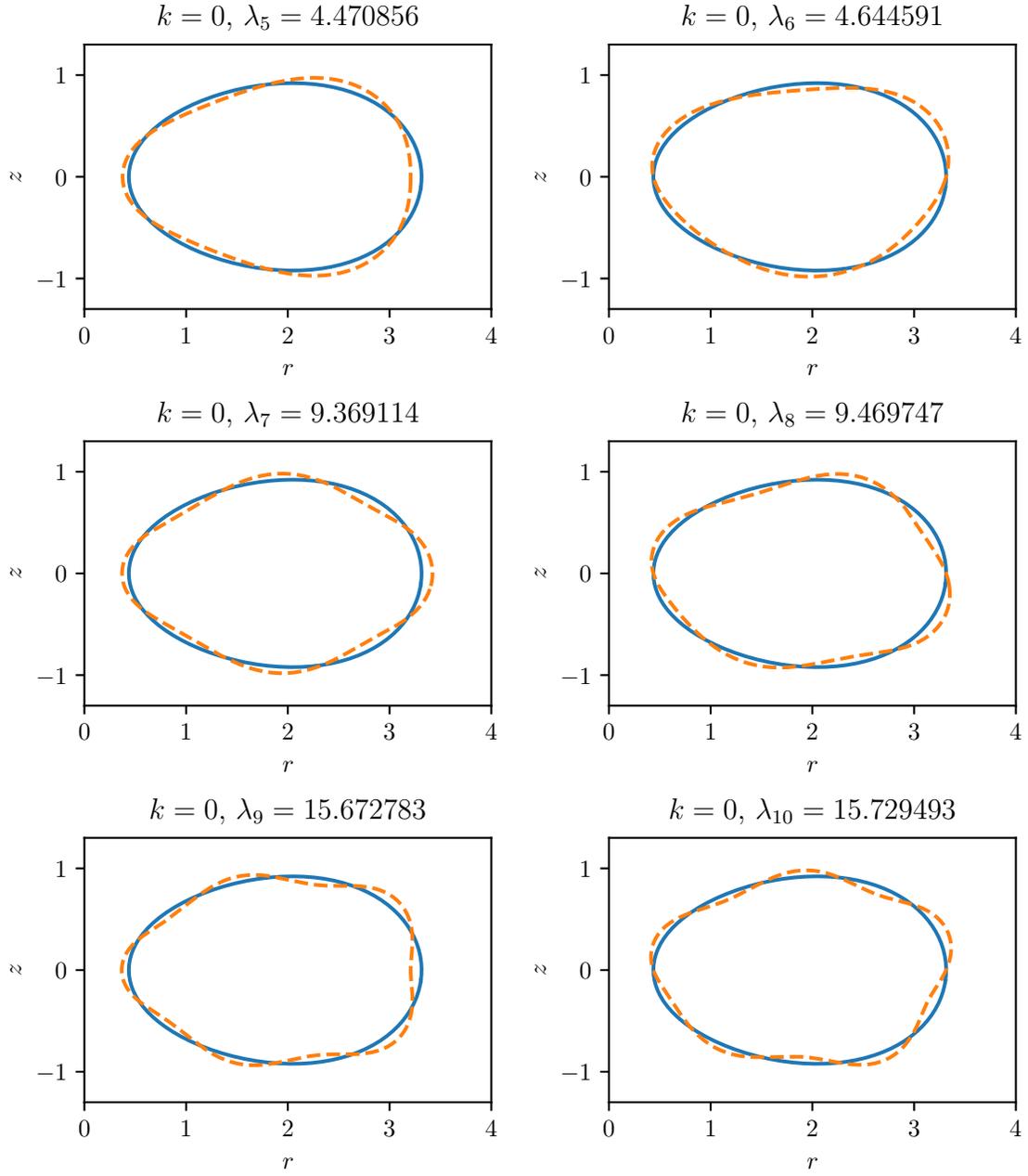}}
  \caption{A continuation of Figure~\ref{fig:k0}, showing the eigenvalues and eigenfunctions of $L_0$.}
    \vspace{1in}
  \label{fig:k0cont}
\end{figure}
\begin{figure}
  \centerline{\includegraphics[scale=.7]{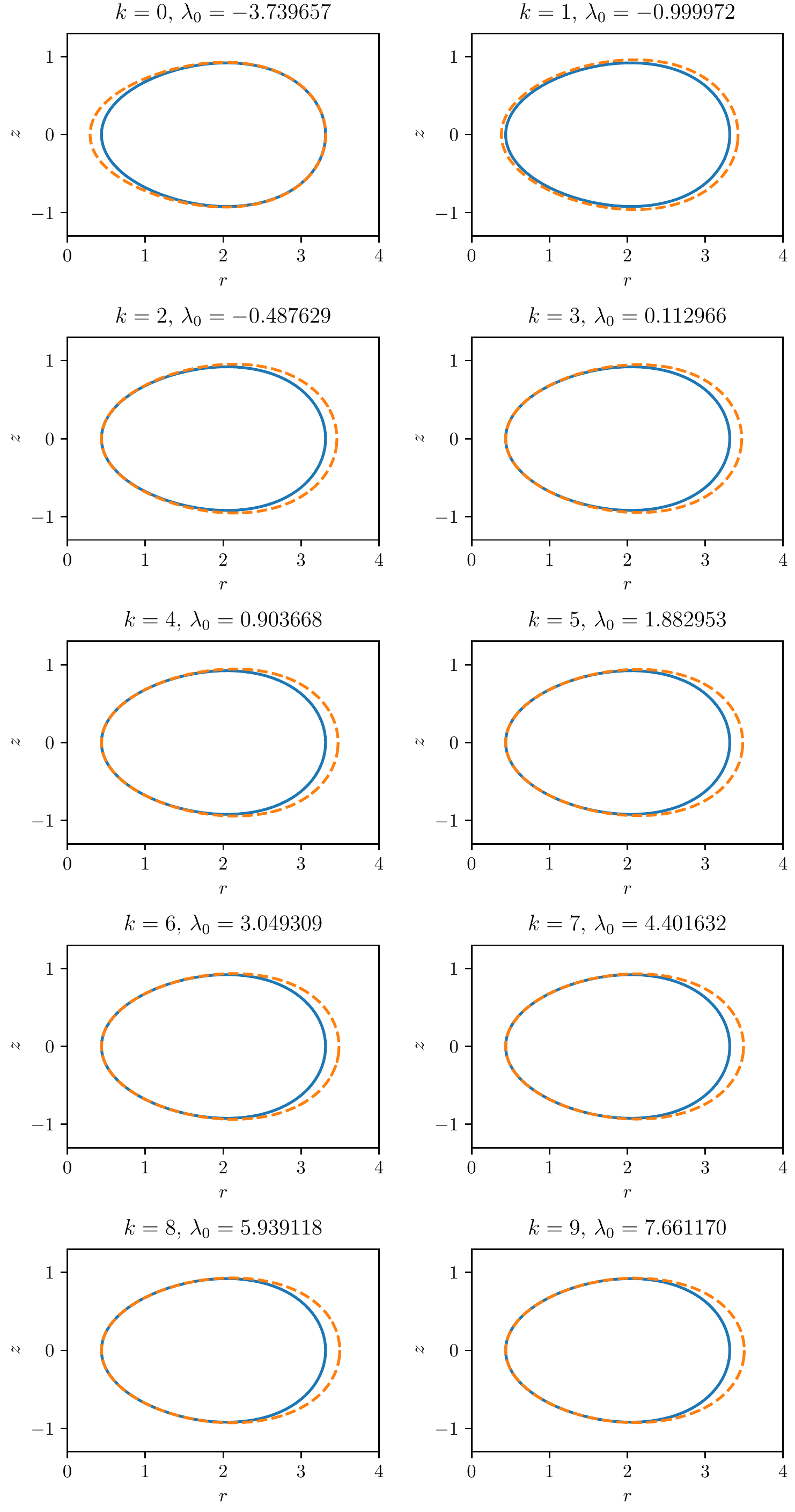}}
  \caption{The lowest eigenvalue and corresponding eigenfunction of $L_k$ for the first several values of $k$.}
  \label{fig:j0}
\end{figure}

To understand this behavior in greater detail, we apply a Liouville transformation \cite{am04, l37} to the formula for the operator $L_k$ given in Proposition~\ref{prop:lk}. As before, we let the variable $t$ parametrize the cross-sectional curve $\Gamma$ with respect to $g^\sigma$ arc length. We also let the variable $s$ parametrize the curve with respect to Euclidean arc length. We use the notation $\dot\psi=\dd[\psi]t$ and $\psi'=\dd[\psi]s$. Note that $\dd[t]s=\sigma$, so $\psi'=\dot\psi\sigma$. With this notation, Proposition~\ref{prop:lk} tells us that $u$ is an eigenfunction of $L_k$ if
\begin{equation}\label{eq:lku}
  L_ku+\lambda u=\sigma\ddtwo t(\sigma u)+\left(1+\frac{1-k^2}{r^2}+\lambda\right)u=0.
\end{equation}
Applying the Liouville transformation $u(t)=\psi(s)\sigma^{-1/2}$, we can compute that $u$ is a solution to equation~\eqref{eq:lku} if and only if $\psi$ is a solution to
\begin{equation}\label{eq:lkpsi}
  \psi''+\left(-\left(\sigma^{-1/2}\right)''\sigma^{1/2}+1+\frac{1-k^2}{r^2}+\lambda\right)\psi=0.
\end{equation}
We can identify equation~\eqref{eq:lkpsi} as a Schr\"odinger equation
\begin{equation*}
  H\psi=\lambda\psi
\end{equation*}
with Hamiltonian
\begin{equation*}
  H=-\ddtwo s + V,
\end{equation*}
where $V\colon\Gamma\to\R$ is the potential
\begin{equation*}
  V:=\left(\sigma^{-1/2}\right)''\sigma^{1/2}-1+\frac{k^2-1}{r^2}.
\end{equation*}

\subsection{Asymptotics for large $j$}
For high-frequency modes when $\lambda$ is large, the kinetic energy term $-\ddtwo s$ is more significant than the bounded potential energy term $V$. Based on this intuition and on \cite{am04, h75}, we can approximate $V$ with its average value
\begin{equation*}
  V_{\text{avg}}:=\frac1{\ell(\Gamma)}\int_\Gamma V\,ds,
\end{equation*}
where $\ell(\Gamma)$ is the length of $\Gamma$ with respect to the Euclidean metric. Thus, we have
\begin{equation*}
  H\approx-\ddtwo s + V_{\text{avg}},
\end{equation*}
and so the eigenvalues of $H$ are approximately
\begin{equation*}
  \lambda_{2j-1}\approx\lambda_{2j}\approx\left(\frac{2\pi}{\ell(\Gamma)}\right)^2j^2+V_{\text{avg}}.
\end{equation*}
Based on \cite{am04,h75}, we expect this approximation to be accurate to $O\left(\frac1{j^3}\right)$.

Using our discrete curve $\Gamma_d$, we can approximate $V_{\text{avg}}$ in a straightforward way. Since the points of $\Gamma_d$ are equally spaced with respect to $t$, we can approximate derivatives and integrals with respect to $t$ using differences and sums. We can easily compute $\sigma$ along the discrete curve, and then using $\dd s =\sigma\dd t$ and $ds=\sigma^{-1}\,dt$, we can approximate derivatives and integrals with respect to $s$ as well.

With this approximation for $V_{\text{avg}}$, we can look at how well our numerically computed eigenvalues $\lambda_{2j-1}$ and $\lambda_j$ of $L_k$ compare with the estimate $\left(\frac{2\pi}{\ell(\Gamma)}\right)^2j^2+V_{\text{avg}}$. We illustrate our findings in the case $k=0$ in Figure~\ref{fig:highj}.

\begin{figure}
  \centering
  \includegraphics{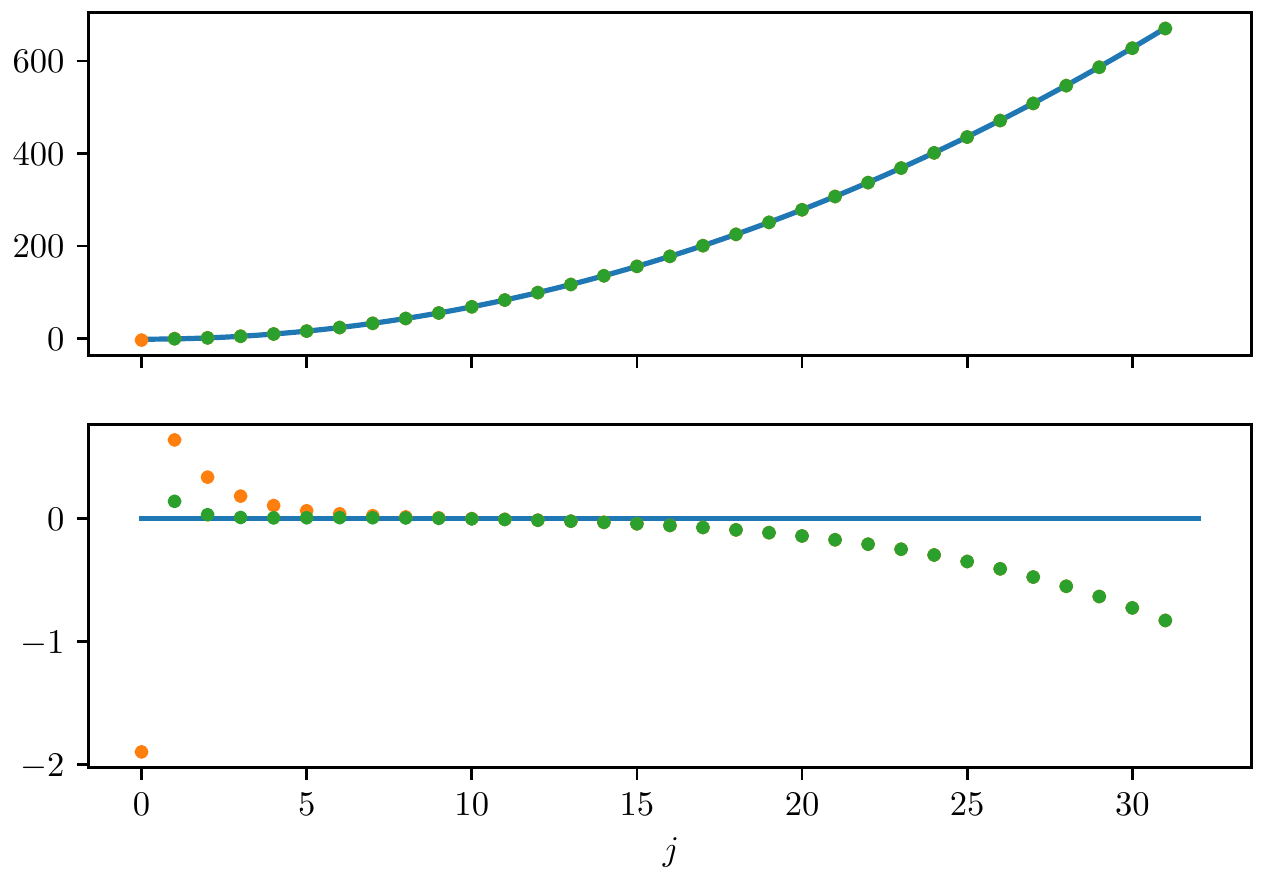}
  \caption{In the upper plot, we plot the eigenvalues $\lambda_{2j-1}$ (green dots) and $\lambda_{2j}$ (orange dots) of our approximation $L_{0;d}$ to the stability operator $L_0$, along with our asymptotic approximation $\left(\frac{2\pi}{\ell(\Gamma)}\right)^2j^2+V_{\text{avg}}$ (blue curve). In the lower plot, we plot the difference between the eigenvalues of $L_{0;d}$ and the asymptotic approximation.}
  \label{fig:highj}
\end{figure}

We observe that the relative error between our numerically computed eigenvalues and our asymptotic estimate is small. However, in absolute terms, we observe that our numerically computed eigenvalues eventually start falling below the asymptotic curve as $O(j^4)$, in contrast to the $O(\frac1{j^3})$ convergence suggested by \cite{am04, h75}. The explanation is that \cite{am04,h75} considers continuous systems, so the results apply to the eigenvalues of $L_0$. However, as our eigenfunctions oscillate faster, the discrete nature of our system becomes more apparent, and so, as we look at larger and larger eigenvalues, the eigenvalues of $L_{0;d}$ start to drift away from the corresponding eigenvalues of $L_0$.

The phenomenon that large eigenvalues of a discrete system deviate from the eigenvalues of the corresponding continuous system appears to be well-known; see for example \cite[Section 2.4]{c68} and \cite{s12}. Either reference has an example where the eigenvalues grow as $\left(\frac{2M}{\ell(\Gamma)}\sin\left(\frac\pi Mj\right)\right)^2$. This expression approaches $\left(\frac{2\pi}{\ell(\Gamma)}\right)^2j^2$ as $M\to\infty$, but, for fixed $M$, it deviates from $\left(\frac{2\pi}{\ell(\Gamma)}\right)^2j^2$ as $O(j^4)$, the same rate that we observe in our work. Note, however, that the coefficient in front of $j^4$ that we empirically observe is close but not equal to the coefficient that we would expect from the formula $\left(\frac{2M}{\ell(\Gamma)}\sin\left(\frac\pi Mj\right)\right)^2$. We believe that the disparity is due to the fact that, in both of these references, the points of the discrete system are equally spaced, whereas in our numerical computation, our points are not equally spaced with respect to $s$.

\subsection{Asymptotics for large $k$}
We now turn our attention to the behavior of the $j$th eigenvalue of $L_k$ for large $k$. In this setting, the potential energy $V$ in $H=-\ddtwo s+V$ is more significant than the kinetic energy $-\ddtwo s$. We restrict our attention to $k\ge2$, in which case the minimum of $V$ occurs at the outermost point of the torus cross-section thanks to the $\frac{k^2-1}{r^2}$ term in $V=\left(\sigma^{-1/2}\right)''\sigma^{1/2}-1+\frac{k^2-1}{r^2}$. As we expect, we see in Figure~\ref{fig:j0} that our wave functions are concentrated near the minimum of the potential $V$. Moreover, as $k$ grows, the potential well becomes steeper, so, as expected, the wave functions become more concentrated as $k$ grows.

As before, we parametrize $\Gamma$ with respect to $g^\sigma$ arc length using the variable $t$ and with respect to Euclidean arc length using the variable $s$. Additionally, we will let the outermost point of the cross-section correspond to $s=t=0$. Near this point, we approximate $V$ with a quadratic potential
\begin{equation*}
  V(s)\approx V(0)+\frac{V''(0)}2s^2,
\end{equation*}
which gives the Hamiltonian for the quantum harmonic oscillator
\begin{equation*}
  H\approx -\ddtwo s+V(0)+\frac{V''(0)}2s^2.
\end{equation*}
The lowest energy state for the quantum harmonic oscillator is slightly larger than the minimum value of the potential energy. More precisely, we have
\begin{equation*}
  \lambda_0\approx V(0)+\sqrt{\frac{V''(0)}2}.
\end{equation*}
More generally,
\begin{equation*}
  \lambda_j\approx V(0)+(2j+1)\sqrt{\frac{V''(0)}2}.
\end{equation*}
We expect this approximation to become better as $k$ grows. Intuitively, the potential well becomes steeper, so the eigenfunction becomes more concentrated near the minimum, so it doesn't ``see'' as well how $V$ differs from its quadratic Taylor approximation.

\begin{figure}
  \centering
  \includegraphics{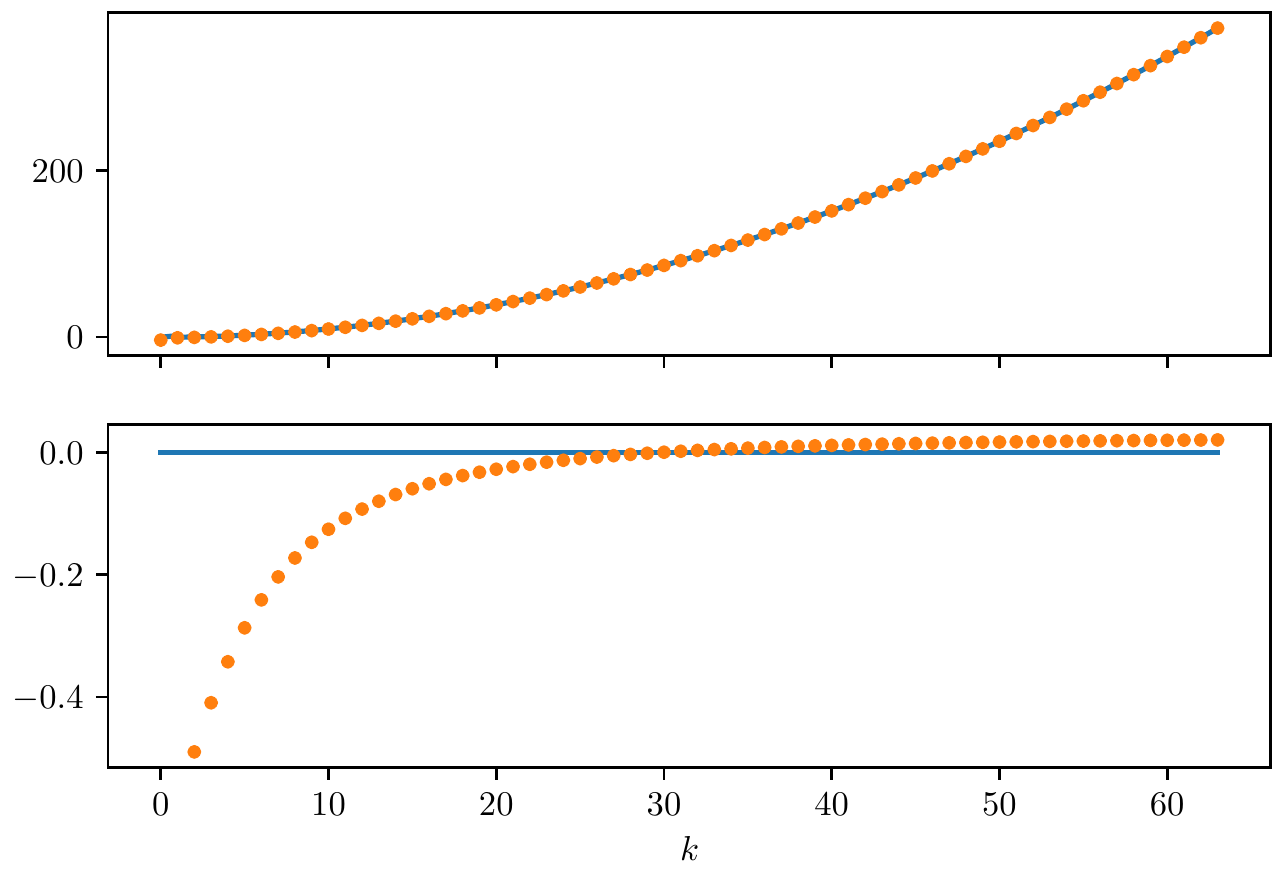}
  \caption{In the upper plot, we plot the eigenvalues $\lambda_0$ (orange dots) of our approximation $L_{k;d}$ to the stability operator $L_k$, along with our asymptotic approximation $V(0)+\sqrt{\frac{V''(0)}2}$ (blue curve). In the lower plot, we plot the difference between the eigenvalues of $L_{k;d}$ and the asymptotic approximation.}
  \label{fig:highk}
\end{figure}

Specializing to $j=0$, we can now look at how well our numerically computed eigenvalues $\lambda_0$ of $L_k$ compare with the estimate $V(0)+\sqrt{\frac{V''(0)}2}$. We plot our findings in Figure~\ref{fig:highk}. We see that, as before, the relative error between our numerically computed eigenvalues and our asymptotic estimate is small. This time, the absolute error is also small, but it still does not tend to zero. This error is sensitive to the number of points $M$, so we suspect that, once again, the behavior of the discrete system is deviating from the behavior of the continuous system. As the region supporting the bulk of the wave function becomes smaller, there become fewer points of the discrete curve in that region.

An additional culprit could be the na\"ive way in which we approximated $V(0)+\sqrt{\frac{V''(0)}2}$. We simply approximated $V$ and its derivatives using finite differences as above, and then evaluated them at the point on the discrete curve where $V$ attains its minimum. In principle, however, we could use either the differential equations defining $\Gamma$ or the algebraic equations defining $\Gamma_d$ to find a formula for $V(0)+\sqrt{\frac{V''(0)}2}$ solely in terms of the maximum value of $r$ along the curve. Doing so would not involve finite differences, so we would expect to get a more accurate estimate.

\end{document}